\newtheorem{thm}{Théorème}[section]
\newtheorem{prop}[thm]{Proposition}
\newtheorem{lem}[thm]{Lemme}
\newtheorem{sous-lem}[thm]{Sous-lemme}
\newtheorem{cor}[thm]{Corollaire}
 \theoremstyle{definition}
\newtheorem{rem}[thm]{Remarque}
\numberwithin{equation}{section}
\newcommand{\nc}{\newcommand}
\nc{\renc}{\renewcommand}
\nc\restr[2]{{
  \left.\kern-\nulldelimiterspace 
  #1 
  \vphantom{\big|} 
  \right|_{#2} 
  }}
\renc{\sec}{\section}
\nc{\ssec}{\subsection}
\nc{\sssec}{\subsubsection}
\nc{\on}{\operatorname}
\nc\wt{\widetilde}
\nc\wh{\widehat}
\nc\ol{\ov}
\nc{\oc}[1]{{\overset{\circ}{#1}}}
\nc{\ov}[1]{{\overline{#1}}}
\nc{\isor}[1]{{\xrightarrow[\raisebox{0.25 em}{\smash{\ensuremath{\sim}}}]{#1}}}
\nc{\isol}[1]{{\xleftarrow[\raisebox{0.25 em}{\smash{\ensuremath{\sim}}}]{#1}}}
\nc{\modmod}{/ \! \! /}
\nc{\mc}{\mathcal}
\nc{\mf}{\mathfrak}
\nc{\mr}{\mathrm}
\nc{\mb}{\mathbb}
\nc{\mbf}{\mathbf}
\nc{\R}{{\mathbb R}}
\nc{\Z}{{\mathbb Z}}
\nc{\N}{{\mathbb N}}
\nc{\C}{{\mathbb C}}
\nc{\Q}{{\mathbb Q}}
\nc{\Fq}{{\mathbb F}_q}
\nc{\Fl}{{\mathbb F}_\ell}
\nc{\Fqbar}{\ol{{\mathbb F}_q}}
\nc{\Flbar}{\ol{{\mathbb F}_\ell}}
\nc{\Zl}{{\mathbb Z}_\ell}
\nc{\Zlbar}{\ol{{\mathbb Z}_\ell}}
\nc{\Ql}{{\mathbb Q}_\ell}
\nc{\Qlbar}{\ol{{\mathbb Q}_\ell}}
\nc{\hl}{\overset{\leftarrow}h{}}
\nc{\hr}{\overset{\rightarrow}h{}}
\nc{\Gr}{{\on{Gr}}}
\nc{\Hecke}{\on{Hecke}}
 \nc{\Hom}{\on{Hom}}
 \nc{\Coker}{\on{Coker}}
 \nc{\Ker}{\on{Ker}}
 \nc{\Lie}{\on{Lie}}
\nc{\Loc}{\on{Loc}}
\nc{\Pic}{\on{Pic}}
\nc{\Bun}{\on{Bun}}
\nc{\IC}{\on{IC}}
\nc{\Aut}{\on{Aut}}
\nc{\Perv}{\on{Perv}}
\nc{\pos}{{\on{pos}}}
\nc{\Sym}{\on{Sym}}
\nc{\ta} {{}^\tau}
\nc {\tu}[1]{{}^{\tau^{#1}}\!}
\nc{\tav} {{}^\sigma}
\nc {\tuv}[1]{{}^{\sigma^{#1}}\!}
\nc{\Chr}{\mr{Cht}\mc R}
\nc{\Id}{\on{Id}}
\nc{\Fil}{\on{Fil}}
\nc{\pr}{\on{pr}}
\nc{\Res}{\on{Res}}
\nc{\cusp}{\on{cusp}}
\nc{\Frob}{\on{Frob}}
\nc{\diag}{\Delta}
\nc{\gr}{\on{gr}}
\nc{\Inj}{\on{Inj}}
\nc{\Bl}{\on{Bl}}
\nc{\dem}{\noindent {\bf Démonstration. }}
\nc{\cqfd}{{\ }\hfill $\square$ \vskip 1mm}
\nc{\s}[1]{\langle #1 \rangle}
\nc{\Cht}{\on{Cht}}
\nc{\isom}{\overset {\thicksim}{\to}}
\nc{\sm}{\smallsetminus}
\nc\Spf{\mathop{\mathrm{Spf }}}
\title{Décomposition au-dessus des paramètres de  Langlands elliptiques}
\author{Vincent Lafforgue et Xinwen Zhu}
\address{Vincent Lafforgue: CNRS et Institut Fourier, UMR 5582, Université Grenoble Alpes, 
 100 rue des Maths, 38610 Gières, France.}
\address{Xinwen Zhu:  California Institute of Technology, Pasadena, CA 91125}
\thanks{
X.Z. a été soutenu par les NSF grants DMS-1602092 et DMS-1902239.}
\begin{document}

\maketitle

Dans ce texte on étend la remarque  8.5 de \cite{ICM-text} au cas non déployé, et on justifie le fait que l'heuristique   (8.3)  de \cite{ICM-text} est vraie au-dessus des paramètres de   Langlands elliptiques $\sigma$ (comme cela est affirmé dans la remarque 8.5 de \cite{ICM-text}).  
Ce résultat avait été proposé par  le second auteur dans  \cite{zhu-notes}. 

Les paragraphes~\ref{para1} et~\ref{para2} énoncent les résultats, le paragraphe~\ref{para3} explique le lien avec les conjectures d'Arthur et Kottwitz, puis les paragraphes suivants donnent les démonstrations. 

Nous remercions Cong Xue pour des commentaires sur cet article. 

\section{Introduction}\label{para1}

Comme dans le paragraphe 12 de \cite{coh}, on fixe 
une courbe projective lisse géométriquement irréductible $X$ sur $\Fq$, de corps des fonctions $F$, et 
un groupe   réductif lisse et géométriquement connexe  $G$ sur  $F$. 
On note $U_0$ l'ouvert maximal de $X$ tel que $G$ se prolonge en un   schéma  en groupes lisse et réductif  sur  $U_0$. 
On étend $G$ en un schéma en groupes lisse sur  $X$,  réductif sur $U_0$, de type parahorique   en les  points de $X\sm U_0$, et dont toutes les fibres  sont géométriquement connexes. 
On fixe aussi une extension finie $E$ de $\Ql$ contenant une racine carrée de $q$. 

Soit $N\subset X$ un sous-schéma fini.   On note $\wh N=|N|\cup (X\sm U_0)$. Soit  $\Bun_{G,N}$ le champ classifiant les  $G$-fibrés principaux sur $X$ avec structure de niveau $N$. On choisit un réseau  $\Xi\subset Z(F)\backslash Z(\mathbb A)$. 
Soit $C_{\cusp}(\Bun_{G,N}(\mathbb F_q)/\Xi,E)$ l'espace des formes automorphes cuspidales  $\Xi$-invariantes à valeurs dans $E$. On note  $\mathbb T_N=C_c(K_N\backslash G(\mathbb A)/K_N, E)$ l'algèbre de  Hecke en niveau $N$, qui agit sur cet espace.

On note   $\wt F$  l'extension finie galoisienne de $F$  telle  que  $\on{Gal}(\wt F/F)$ soit l'image  de $\on{Gal}(\ov F/F)$ dans le groupe des  automorphismes de la donnée radicielle  de $G$. Le  $L$-groupe ${}^{L }G$ est le  produit   semi-direct
   $\wh G\rtimes \on{Gal}(\wt F/F)$    pour l'action de $\on{Gal}(\wt F/F)$ sur 
 $\wh G$ qui préserve un épinglage. 
 
On note  $\Gamma=\on{Gal}(\ov F/F)$ le groupe de  Galois de $F$. 

Soit  $\on{FinS}_*$ la  catégorie des ensembles finis pointés. 
On considère deux catégories additives cofibrées sur  $\on{FinS}_*$. 
La première, notée   $\on{Rep}(\hat{G}\times {}^L{G}^\bullet)$ 
associe à tout ensemble fini pointé $\{0\}\cup I$  la  catégorie $\on{Rep}(\hat{G}\times ({}^L{G})^I)$ des représentations algébriques $E$-linéaires de dimension finie 
de  $\hat{G}\times ({}^{L}G)^I$, et le foncteur d'image inverse par une application pointée  $\xi: \{0\}\cup I\to \{0\}\cup J$ est donné par la restriction naturelle suivant le morphisme de groupes algébriques  $\hat{G}\times ({}^L{G})^J\to \hat{G}\times ({}^L{G})^I$.
La seconde, notée  $\on{Rep}_{\mathbb T_N}(\Gamma^\bullet)$, associe à  $\{0\}\cup I$ la  catégorie  $\on{Rep}_{\mathbb T_N}(\Gamma^I)$ 
des représentations   continues de  $\Gamma^I$ sur des $\mathbb T_N$-modules $\Z$-gradués, et le foncteur  d'image inverse par une application pointée  $\xi: \{0\}\cup I\to \{0\}\cup J$ est donné par la restriction naturelle suivant le morphisme de groupes  topologiques  $\Gamma^J=\{1\}\times\Gamma^J\to\{1\}\times\Gamma^I=\Gamma^I$.
 
 On rappelle le théorème suivant. 
 
\begin{thm} (démontré pour $G$ déployé, conditionnel en général en ce qui concerne les finitudes  \cite{coh,cong, cong-finite}). 
On possède deux foncteurs $E$-linéaires  $H$ et $H^{\cusp}$ de  $\on{Rep}(\hat{G}\times {}^L{G}^\bullet)$ vers  $\on{Rep}_{\mathbb T_N}(\Gamma^\bullet)$ cofibrés sur  $\on{FinS}_*$.

Pour tout ensemble fini pointé  $\{0\}\cup I$, et toute représentation  $W$ de  $\hat{G}\times {}^{L}G^I$, on note $H_{\{0\}\cup I,W}$ le $\Gamma^I\times \mathbb T_N$-module correspondant,  et  $H^{\cusp}_{\{0\}\cup I,W}$ son sous-module ``cuspidal''. 

  En particulier on a  $H_{\{0\},\mathbf{1}}=C_{c}(\Bun_{G,N}(\mathbb F_q)/\Xi,E)$, muni  de la  $\mathbb T_N$-action naturelle, et $H^{\cusp}_{\{0\},\mathbf{1}}=C_{c}^{\cusp}(\Bun_{G,N}(\mathbb F_q)/\Xi,E)$.
  
  On a les propriétés de finitude suivantes (montrées dans \cite{cong, cong-finite} pour $G$ déployé,  conjecturées en général):  pour $ I,W$ comme ci-dessus, 
 $H^{\cusp}_{\{0\}\cup I,W}$ est un $E$-espace vectoriel de dimension finie, et, pour toute place $v$ dans $X\sm N$,   $H_{\{0\}\cup I,W}$    est de type fini sur l'agèbre de Hecke sphérique $\mathbb T_{v}$ en $v$. 
   \end{thm}
On expliquera dans le paragraphe \ref{rappels-coho} la construction des foncteurs  $H$ et $H^{\cusp}$, d'après \cite{coh}. En gros  $H_{\{0\}\cup I,W}$ est  la cohomologie à support compact d'un champ de chtoucas associé à  $W$, et $H^{\cusp}_{\{0\}\cup I,W}$ est sa partie ``cuspidale'' (au sens de \cite{cong} dans le cas déployé,  et sa partie Hecke-finie en général). Le théorème signifie plus concrètement que $H_{\{0\}\cup I,W}$ et $H^{\cusp}_{\{0\}\cup I,W}$ sont fonctoriels en $W$ et compatibles avec les isomorphismes de coalescence (nous nous référons à \cite[Proposition 9.7]{coh} pour plus de détails).
La finitude de la dimension des espaces vectoriels $H^{\cusp}_{\{0\}\cup I,W}$ est montrée dans \cite{cong} lorsque $G$ est déployé. On s'attend à ce que la preuve s'étende pour tout $G$ mais cela n'est pas encore écrit. On l'admet dans cet article.

Comme dans  \cite{coh}, on déduit du foncteur  $H^{\cusp}$ l'algèbre  d'excursion  $\mathcal B^{\cusp}$ qui agit sur chaque  $H^{\cusp}_{I, W}$, en commutant avec les actions de  $\Gamma^I\times \mathbb T_N$. De plus les algèbres  $\mathcal B^{\cusp}$ et  $\mathbb T_N$ 
contiennent les opérateurs de Hecke aux places non-ramifiées et ceux-ci agissent de la même fa\c con par les deux algèbres (grâce à  \cite{genestier-lafforgue} on a aussi une compatibilité entre les actions de $\mathcal B^{\cusp}$ et  $\mathbb T_N$ en les places ramifiées).  
 L'action de $\mathcal B^{\cusp}$  sur l'espace vectoriel gradué $H^{\cusp}_{ I, W}$ de dimension finie 
fournit une décomposition de cet espace en sous-espaces  \emph{généralisés}, 
\[
H^{\cusp}_{ I, W}\otimes_E\Qlbar=\bigoplus (H^{\cusp}_{I, W})_\sigma,
\]
où la somme directe est indexée par des caractères  $\nu: \mathcal B^{\cusp}\to\Qlbar$. Cette décomposition est respectée par l'action de 
$\Gamma^I\times \mathbb T_N^{\Qlbar}$, où $\mathbb T_N^{\Qlbar}:=\mathbb T_N\otimes_E\Qlbar$. 
D'après  \cite{coh}, tout caractère $\nu $ détermine un morphisme  semi-simple   $\sigma:\Gamma\to {}^L{G}(\Qlbar)$ à conjugaison près par  $\hat{G}$,  satisfaisant  les  conditions suivantes: 
\begin{itemize}
\item [] (C1) $\sigma$ prend ses valeurs  dans  ${}^{L}G(E')$, où $E'$ est une extension finie  de  $E$, et il  est continu, 
\item [] (C2) l'adhérence de Zariski de son image est réductive, 
\item [] (C3)  $\sigma$  se factorise à travers 
$ \pi_{1}(U, \ov\eta)$ pour un certain ouvert $U\subset U_0$, 
\item [] (C4) on a  la commutativité du   diagramme 
 \begin{gather}\label{diag-sigma}
 \xymatrix{
\on{Gal}(\ov F/F) \ar[rr] ^{\sigma}
\ar[dr] 
&& {}^{L} G(\Qlbar) \ar[dl] 
 \\
& \on{Gal}(\wt F/F) }\end{gather}
\item [] (C5) la composition de $\sigma$ avec 
\begin{gather}\label{compo-ab} {}^{L}  G(\overline{{\mathbb Q}_\ell})\to
(\wh G^{ab})(\overline{{\mathbb Q}_\ell})\rtimes \on{Gal}(\wt F/F)
\end{gather} a une image finie. 
\end{itemize}

Réciproquement $\sigma$ détermine $\nu$, c'est pourquoi on écrit $(H^{\cusp}_{I, W})_\sigma$. 

Pour tout $\sigma$ comme ci-dessus, on note  $S_\sigma$ le centralisateur de son image dans  $\hat{G}$.
Un tel $\sigma$ est dit elliptique si  $\mathfrak S_\sigma:=S_\sigma/(Z_{\hat{G}})^{\on{Gal}(\wt F/F)}$ est fini. 

Pour toute  représentation $W$ de $({}^L{G})^I$, on note $W_{\sigma^I}$ la représentation composée $\Gamma^{I}\stackrel{\sigma^I}{\to} ({}^L{G})^I\to GL(W)$.
Voici le résultat principal de cet article. 

  \begin{prop}\label{prop-ppale} (démontrée par $G$ déployé, conditionnelle en général à l'extension de \cite{cong} au cas non déployé) 
  Pour tout $\sigma$ elliptique, il existe un $\Qlbar$-espace vectoriel de dimension finie $\mathfrak A_\sigma$, muni d'une action de 
  $\mathbb T_N^{\Qlbar}\times\mathfrak S_\sigma$, tel que pour tout  $W$,  on a l'égalité 
\begin{gather}\label{eq-prop-ppale}(H^{\cusp}_{I,W})_{\sigma}= \big( \mathfrak A_{\sigma}\otimes_{\overline{{\mathbb Q}_\ell}} W_{\sigma^{I}}\big)^{S_{\sigma}},\end{gather} 
 équivariante par l'action de  $\mathbb T_N^{\Qlbar}\times \Gamma^{I}$, 
 fonctorielle en $W$, compatible avec les isomorphismes de coalescence, et  compatible avec le changement de  structure de niveau. 
Autrement dit l'heuristique  (8.3) de  \cite{ICM-text} est vraie au-dessus de $\sigma$.  
  \end{prop}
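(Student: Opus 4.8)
The strategy is to promote the "local" decomposition coming from the excursion algebra $\mathcal B$ into a "global" decomposition of the whole cohomology functor $W\mapsto H_{I,W}$, exploiting that $\sigma$ is elliptic so that $\mathfrak S_\sigma$ is a finite group. Fix the character $\nu:\mathcal B\to\Qlbar$ corresponding to $\sigma$ and work systematically in the $\nu$-generalized eigenspace $(H_{I,W})_\sigma$, which is functorial in $W$ and compatible with coalescence since $\mathcal B$ commutes with the $\Gamma^I\times\mathbb T_N$-action and with the coalescence maps. The heart of the matter is to identify this sub-functor of $\on{Rep}(({}^LG)^I)\to (\Gamma^I\times\mathbb T_N)\text{-mod}$. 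By the Tannakian/categorical formalism of \cite{coh} (the ``$\mathrm{Vect}$-valued'' reconstruction from excursion data), a functor of this shape satisfying functoriality + coalescence compatibility + the fact that the $\Gamma^I$-action on $(H_{I,W})_\sigma$ factors (up to generalized eigenvalues) through $\sigma^I$ is the same datum as a representation of the groupoid/semidirect-product gadget built from $\widehat G$ and the centralizer data at $\sigma$; concretely it should be a module over $\widehat G\ltimes(\text{something})$, and ellipticity is exactly what forces that ``something'' to collapse to the finite group $\mathfrak S_\sigma$.

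First I would set up $\mathfrak A_\sigma$ as the natural candidate: take $W$ to run over $\on{Rep}(({}^LG)^{\{1\}})=\on{Rep}({}^LG)$ (the one-element index set) and consider $\bigoplus_W (H_{\{1\},W})_\sigma\otimes W^\vee$, or more precisely the image of the ``universal'' object under the fiber functor; equivalently, use that $H$ is cofibered over $\on{FinS}_*$ to define $\mathfrak A_\sigma$ as $(H_{\{1\},\mathrm{Reg}})_\sigma$ evaluated against the regular representation of $\widehat G$ sitting inside ${}^LG$ (a pro-object, handled as in \cite{coh} by exhausting by finite-dimensional subrepresentations). The $\mathbb T_N$-action on $\mathfrak A_\sigma$ is the restriction of the one on $H$; the $\widehat G$-action is the residual Hecke/geometric-Satake action at the auxiliary leg; and $\mathfrak S_\sigma=S_\sigma/(Z_{\widehat G})^{\on{Gal}(\wt F/F)}$ acts because $(Z_{\widehat G})^{\on{Gal}(\wt F/F)}$ already acts trivially on each $(H_{I,W})_\sigma$ (it acts on $W_{\sigma^I}$ through the central character, which is absorbed, and on $\mathfrak A_\sigma$ via the $\widehat G$-action) — this triviality is a direct consequence of (C5) and the normalization choices in \cite{coh}. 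Finite-dimensionality of $\mathfrak A_\sigma$ then follows from finite-dimensionality of the $H_{I,W}$ (admitted in the excerpt) together with the fact that, $\sigma$ being elliptic, only finitely many irreducible $\widehat G$-types can occur.

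Next I would prove the formula \eqref{eq-prop-ppale}. One inclusion is formal: a class in $(H_{I,W})_\sigma$, pushed into the cohomology with an extra leg carrying the regular representation and then contracted, lands in $\big(\mathfrak A_\sigma\otimes W_{\sigma^I}\big)^{S_\sigma}$ by $\widehat G$-equivariance of the excursion operators and the description of how coalescence merges the extra leg. For the reverse inclusion — and this is where the real content sits — I would use the ellipticity hypothesis to run a finite-group averaging/idempotent argument: because $\mathfrak S_\sigma$ is finite, the $S_\sigma$-invariants are cut out by an honest projector (up to the central torus, which acts trivially), so one can decompose $\mathfrak A_\sigma$ as a $\widehat G\times\mathfrak S_\sigma$-module, match isotypic components leg by leg using the partial-Frobenius/coalescence structure, and check that nothing is lost. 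Compatibility with the $\Gamma^I$-action (via $\sigma^I$), with coalescence isomorphisms, and with change of level $N'\subset N$ are then inherited from the corresponding compatibilities of $H$ and of the excursion algebra, since all constructions are natural; the level-change compatibility in particular uses that the $\mathbb T_{N'}\to\mathbb T_N$ transition and the $\mathcal B$-action commute, which is part of the formalism recalled after the theorem.

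**Main obstacle.** The delicate point is the reverse inclusion, i.e. showing that $\big(\mathfrak A_\sigma\otimes W_{\sigma^I}\big)^{S_\sigma}$ is not \emph{larger} than $(H_{I,W})_\sigma$ — equivalently, that the ``extra leg carrying the regular representation'' genuinely sees all of the cohomology and that coalescing it reproduces $H_{I,W}$ with no correction terms. In the split case over a single elliptic $\sigma$ this is Remarque~8.5 of \cite{ICM-text}; the work here is to check that the non-split ingredients — the Galois action through $\on{Gal}(\wt F/F)$, condition (C4), the semidirect product ${}^LG$ in place of $\widehat G$ — do not break the argument, which I expect to reduce to: (i) geometric Satake for the (possibly ramified, quasi-split over the relevant cover) group $G$ still provides the ${}^LG$-action on the relevant legs, as in \cite{coh}; and (ii) the excursion operators still generate enough of $\on{End}$ of the fiber functor after restricting to the $\sigma$-part, which is where ellipticity ($\mathfrak S_\sigma$ finite, hence $S_\sigma$ reductive with finite component group modulo center) is used to guarantee that the Tannakian reconstruction converges to a finite-dimensional $\mathfrak A_\sigma$ rather than an infinite-dimensional or merely pro-object.
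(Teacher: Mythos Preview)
Your setup is broadly right --- taking $\mathfrak A_\sigma$ from $H_{\{0\},\on{Reg}}$ and reading off the formula via $S_\sigma$-invariants is indeed the paper's framework --- but you have misidentified where the real difficulty lies, and your proposed ``reverse inclusion via averaging over the finite group $\mathfrak S_\sigma$'' does not address it.

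The issue is not whether $S_\sigma$-invariants are computed by a projector; that is automatic. The issue is that $(H_{I,W})_\sigma$ is, a priori, only a \emph{generalized} eigenspace for the excursion algebra. Equivalently, on the slice-localized object $(H_{\{0\},\on{Reg}})_{\sigma,Z}$ the operators $F_{f,\gamma}$ generate a commutative local artinian $\Qlbar$-algebra $\mathcal C$ which might strictly contain $\Qlbar$; if so, the $\Gamma^I$-action on $(H_{I,W})_\sigma$ does \emph{not} factor through $\sigma^I$ but through a nilpotent thickening of it, and the formula $(H_{I,W})_\sigma=(\mathfrak A_\sigma\otimes W_{\sigma^I})^{S_\sigma}$ fails. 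Your sentence ``the $\Gamma^I$-action \dots\ factors (up to generalized eigenvalues) through $\sigma^I$'' silently assumes away exactly the point to be proved: one must show there are no generalized eigenvalues, i.e.\ that $\mathcal C=\Qlbar$.

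The paper's mechanism for this is deformation-theoretic, not a finite-group averaging. One packages the $F_{f,\gamma}$ into a continuous morphism $\beta:\pi_1(U,\ov\eta)\to {}^LG(\mathcal C)$ reducing to $\sigma$ modulo the maximal ideal. Ellipticity is used, via purity (Weil II) and the equality $\on{Lie}(\mathfrak S_\sigma)=(\wh{\mathfrak g}_{\on{ad}})^\sigma$, to prove $H^1(U,\wh{\mathfrak g}_\sigma)=0$, so $\sigma$ admits no infinitesimal deformation: $\beta$ is conjugate to $\sigma$ over $\mathcal C$. One then pins $\beta$ down on the nose (not just up to conjugacy) by first choosing $\gamma_1,\dots,\gamma_n$ with $\sigma(\gamma_i)$ Zariski-dense in the image, invoking Richardson to get that the $\wh G$-orbit of $(\sigma(\gamma_i))$ in $({}^LG)^n$ is closed with stabilizer $S_\sigma$, and applying Luna's \'etale slice theorem to produce $Z$ transverse to the orbit; since $(\beta(\gamma_i))$ lies in the slice and is conjugate to $(\sigma(\gamma_i))$, it equals it scheme-theoretically, forcing $\beta=\sigma$ and $\mathcal C=\Qlbar$. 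None of these ingredients --- the rigidity lemma, Richardson, or Luna --- appears in your plan; the finiteness of $\mathfrak S_\sigma$ enters through the vanishing of $H^1$, not through a projector.
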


La démonstration sera donnée à partir du paragraphe \ref{non-deform}.

\begin{rem} Le membre de droite de \eqref{eq-prop-ppale} est nul lorsque 
$W^{ Z_{\hat{G}}^{\Gamma_F}}=0$. Dans ce cas la nullité du membre de gauche est évidente car le champ de chtoucas correspondant est vide. 
Le cas intéressant de \eqref{eq-prop-ppale} est donc celui où $W$ est une représentation de $ {}^L{G}^I/ Z_{\hat{G}}^{\Gamma_F}$. Dans ce cas on peut remplacer le membre de droite de  \eqref{eq-prop-ppale} par 
$\big( \mathfrak A_{\sigma}\otimes_{\overline{{\mathbb Q}_\ell}} W_{\sigma^{I}}\big)^{\mathfrak  S_{\sigma}}$. 
\end{rem}

 \begin{rem} \label{rem-propre-naif} 
 La proposition  implique que pour $\sigma$ elliptique  $(H^{\cusp}_{I,W})_{\sigma}$ est en fait le sous-espace propre au sens strict (alors qu'il est défini comme   sous-espace propre au  sens généralisé) de  $H^{\cusp}_{I,W}\otimes_{E}\overline{{\mathbb Q}_\ell} 
$
associé à  $\sigma$ pour l'action des opérateurs d'excursion, et de plus $(H^{\cusp}_{I,W})_\sigma$ est semi-simple comme représentation de  $\Gamma^I$. 
\end{rem}

 \begin{rem} En  général à cause de la  déformation possible de certains   $\sigma$ non elliptiques 
 il pourrait y avoir des nilpotents,
 et par exemple on ne sait pas montrer que  l'algèbre engendrée par les opérateurs d'excursion agissant sur $H^{\cusp}_{I,W}$ (ou même sur $H^{\cusp}_{\emptyset,\mbf 1}$) est réduite, donc on ne sait pas montrer 
 la première partie de l'énoncé de la remarque précédente pour tous les $\sigma$ et à fortiori on ne sait pas montrer l'heuristique (8.3) de \cite{ICM-text}. 
    \end{rem}

\section{Le cas de $GL_{r}$}
\label{para2}

On prend  $G=GL_{r}$ et on applique la proposition \ref{prop-ppale}. 
Alors une représentation semi-simple $\sigma$ est elliptique si et seulement si
elle est irréductible, et dans ce cas  $\mathfrak S_\sigma$ est trivial. 
Il en résulte que 
\[\mathfrak A_\sigma=\mathfrak A_\sigma^{\mathfrak S_\sigma}=(H^{\cusp}_{\emptyset,\mbf 1})_\sigma.\]
Par le théorème de multiplicité un fort pour l'espace des formes automorphes cuspidales pour  $GL_{r}$, on voit que  $\mathfrak A_\sigma=\pi^{K_N}$, où  $\pi$ est l'unique représentation cuspidale qui correspond à $\sigma$. On a donc la conséquence suivante de la proposition~\ref{prop-ppale}. 
\begin{cor}
Pour  $G=GL_{r}$, si $\sigma$ est  irréducible, alors 
\[(H^{\cusp}_{I,W})_\sigma= \pi^{K_N}\otimes (W^{\mb G_m})_{\sigma^I}.\]
\end{cor}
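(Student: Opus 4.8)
The plan is to simply unwind Proposition~\ref{prop-ppale} in the special case $G=GL_r$, using the well-known structure theory of Langlands parameters for $GL_r$. First I would recall that for $GL_r$ the $L$-group is simply $\wh G=GL_r(\Qlbar)$ (the Galois factor is trivial since $\wt F=F$), so a semisimple parameter $\sigma:\Gamma\to GL_r(\Qlbar)$ is, after choosing a basis, an $r$-dimensional semisimple continuous representation. The centralizer $S_\sigma$ of the image is then, by Schur's lemma applied to each isotypic component, a product $\prod_i GL_{m_i}(\Qlbar)$ where $\sigma=\bigoplus_i \rho_i^{\oplus m_i}$ with the $\rho_i$ pairwise non-isomorphic irreducibles. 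Since $Z_{\wh G}=\mb G_m$ and the Galois action is trivial, $\mathfrak S_\sigma = S_\sigma/\mb G_m$, which is finite if and only if $S_\sigma=\mb G_m$, i.e.\ if and only if $\sigma$ is irreducible (a single $\rho_i$ with $m_i=1$). This establishes the first assertion and shows $\mathfrak S_\sigma$ is trivial in the elliptic case.

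Next, for $\sigma$ irreducible I would take $W=\mbf 1$ and $I=\emptyset$ in \eqref{eq-prop-ppale}: the right-hand side is $(\mathfrak A_\sigma\otimes_{\Qlbar}\Qlbar)^{S_\sigma}=\mathfrak A_\sigma^{S_\sigma}=\mathfrak A_\sigma^{\mathfrak S_\sigma}$, and since $\mathfrak S_\sigma$ is trivial this is all of $\mathfrak A_\sigma$; the left-hand side is $(H_{\emptyset,\mbf 1})_\sigma$. Hence $\mathfrak A_\sigma=(H_{\emptyset,\mbf 1})_\sigma$. Now I would invoke strong multiplicity one for cuspidal automorphic representations of $GL_r$ over the function field $F$ (Piatetski-Shapiro, or in this context the results underlying \cite{coh}), together with the compatibility of the $\mathcal B$-action with Hecke operators at the unramified places: the $\sigma$-generalized eigenspace $(H_{\emptyset,\mbf 1})_\sigma=C_{\cusp}(\Bun_{G,N}(\Fq)/\Xi,E)_\sigma\otimes\Qlbar$ is exactly the $\pi$-isotypic part, where $\pi$ is the unique cuspidal representation with the given Hecke eigenvalues (equivalently, with $L$-parameter $\sigma$), and this part is just $\pi^{K_N}$ as a $\mathbb T_N$-module. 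Thus $\mathfrak A_\sigma=\pi^{K_N}$.

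Finally, substituting $\mathfrak A_\sigma=\pi^{K_N}$ and $S_\sigma=\mb G_m=Z_{\wh G}$ back into \eqref{eq-prop-ppale} gives
\[
(H_{I,W})_\sigma=\big(\pi^{K_N}\otimes_{\Qlbar} W_{\sigma^I}\big)^{\mb G_m}=\pi^{K_N}\otimes (W^{\mb G_m})_{\sigma^I},
\]
where $W^{\mb G_m}$ denotes the invariants of $W$ under the central $\mb G_m\subset GL_r$ acting diagonally in all $I$ copies (equivalently, the part of $W$ on which the center acts trivially), and the $\mb G_m$-invariants pass inside because $\pi^{K_N}$ carries no central action in this decomposition; the $\Gamma^I$-action on $W^{\mb G_m}$ is via $\sigma^I$. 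This is precisely the stated formula.

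\textbf{Main obstacle.} The only substantive point is the identification $\mathfrak A_\sigma=\pi^{K_N}$, which rests on strong multiplicity one for $GL_r$ over function fields and on the fact that the excursion algebra $\mathcal B$ acting on $H_{\emptyset,\mbf 1}$ separates cuspidal representations the same way the unramified Hecke algebra does — this is exactly the compatibility between the $\mathcal B$-action and the $\mathbb T_N$-action recalled in the introduction (via \cite{genestier-lafforgue} at the ramified places). Everything else is linear algebra with the centralizer $S_\sigma$, which for $GL_r$ is as elementary as it gets.
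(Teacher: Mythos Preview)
Your proposal is correct and follows essentially the same route as the paper: identify elliptic with irreducible for $GL_r$ so that $\mathfrak S_\sigma$ is trivial, specialize \eqref{eq-prop-ppale} to $I=\emptyset$, $W=\mbf 1$ to get $\mathfrak A_\sigma=(H_{\emptyset,\mbf 1})_\sigma$, invoke strong multiplicity one to obtain $\mathfrak A_\sigma=\pi^{K_N}$, and substitute back. You give more detail on the centralizer computation and on why the $\mb G_m$-invariants pull out (the paper tacitly uses that $(Z_{\wh G})^{\Gamma_F}$ acts trivially on $\mathfrak A_\sigma$), but the argument is the same.
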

\begin{rem}
Dans le cas des chtoucas de Drinfeld ($G=GL_{r}$, $I=\{1,2\}$, $W=\on{St}\otimes \on{St}^{*}$) l'énoncé obtenu dans le corollaire ci-dessus est exactement celui montré par Laurent Lafforgue dans \cite{laurent-inventiones}  (où les multiplicités sont déterminées aux $r$-négligeables près). Pour  $GL_{2}$, 
cela est dû à   Drinfeld  \cite{drinfeld-Petersson}.  
\end{rem}

\begin{rem}
On conjecture que pour  $GL_{r}$ tout $\sigma$ apparaissant dans la décomposition de  $H^{\cusp}_{I,W}$ est irréductible  (cela est connu dans le cas des formes automorphes, c'est-à-dire pour $W=\mbf 1$, mais pas en général). 
Sous cette hypothèse supplémentaire on obtient  la conjecture 2.35 de Varshavsky \cite{var}.  
\end{rem}

\section{Relation avec les formules de multiplicités d'Arthur-Kottwitz}
\label{para3}

On revient au cas d'un groupe $G$ réductif arbitraire. 

La formule
\[(H^{\cusp}_{I,W})_\sigma=\big( \mathfrak A_{\sigma}\otimes_{\overline{{\mathbb Q}_\ell}} W_{\sigma^{I}}\big)^{S_{\sigma}} \]
que nous montrons dans cet article pour $\sigma$ elliptique
est  compatible avec la conjecture de 
 Kottwitz sur la partie cuspidale tempérée du spectre  automorphe (voir  \cite{cusp-temper}) et de la cohomologie des variétés de Shimura (see \cite{KoAnnArbor}). Ici bien sûr on considère  $H^{\cusp}_{I,W}$ 
 comme un analogue généralisé, dans le cas des corps de fonctions,  de la cohomologie des  variétés de Shimura.  
  On rappelle que la conjecture de 
 Kottwitz sur la partie cuspidale tempérée du spectre  automorphe 
 est un cas particulier des formules de multiplicités d'Arthur. 
On garde l'hypothèse que  $\sigma$ est elliptique.

On rappelle que  $\mathfrak A_\sigma$  est muni des actions (commutant entre elles) de l'algèbre de Hecke $\mathbb T_N^{\Qlbar}:=C_c(K_N\backslash G(\mathbb A)/K_N, \overline{{\mathbb Q}_\ell})$ et du groupe fini 
 $\mathfrak S_\sigma: = S_{\sigma}/(Z_{\wh  G})^{ \on{Gal}(\wt F/F)}$. 

On écrit 
\[\mathfrak A^{ss}_\sigma=\bigoplus \pi^{K_N}\otimes \rho_\pi,\] 
où  $\mathfrak A^{ss}_\sigma$ est la  semi-simplification de  $\mathfrak A_\sigma$ comme  $\mathbb T_N^{\Qlbar}$-module, où la somme directe est prise sur les  $\mathbb T_N^{\Qlbar}$-modules  irréductibles $\pi^{K_N}$, et où  $\rho_\pi$ est un espace de multiplicités, qui est en fait une représentation de dimension finie de   $\mathfrak S_\sigma$. On conjecture que  $\mathfrak A_\sigma$  est  semi-simple comme $\mathbb T_N^{\Qlbar}$-module. En fait, on conjecture que les $H^{\cusp}_{I,W}$  sont semi-simples comme $\mathbb T_N$-modules, mais on ne sait le montrer que pour $W=\bf 1$ (cas des formes automorphes). 

On remarque que si  un  $\mathbb T_N^{\Qlbar}$-module irréductible $\pi^{K_N}$ apparaît dans  $(H^{\cusp}_{I,W})_\sigma$, alors   $\sigma$ est elliptique  si et seulement si 
$\pi$ est tempéré aux places non-ramifiées et
la  function $L(s,\pi, \on{Ad}^0)$ partielle (restreinte aux places où  $\pi$ est non-ramifié)  
admet un prolongement méromorphe au plan complexe tout entier et n'a pas de pôle en  $s=1$, où  $\on{Ad}^0$ est la représentation  adjointe de  ${}^{L}G$ sur l'algèbre de Lie adjointe  $\hat{\mathfrak g}_{ad}$. 
En effet on a $$\on{Lie} (\mf S_{\sigma})=\on{Lie} (S_{\sigma})/\on{Lie} ((Z_{\wh  G})^{ \on{Gal}(\wt F/F)})=(\on{Ad})^{\sigma}/\on{Lie} ((Z_{\wh  G})^{ \on{Gal}(\wt F/F)})=(\on{Ad}^{0})^{\sigma}$$ où $(\on{Ad})^{\sigma}$ et $(\on{Ad}^{0})^{\sigma}$ désignent les invariants par $\sigma$. 
Par conséquent si 
$\pi^{K_N}$ apparaît dans un  $(H^{\cusp}_{I,W})_\sigma$ avec $\sigma$ elliptique,
il apparaît seulement dans des  $(H^{\cusp}_{I,W})_{\sigma'}$ avec  $\sigma'$ elliptique.

Suivant la littérature sur les formules de multiplicités, on note 
$\langle \pi, \cdot\rangle$ la fonction 
 trace $\on{tr}\rho_\pi$ sur  $\mathfrak S_\sigma$.
Il résulte de tout cela que la multiplicité de $\pi^K$ dans  $(H^{\cusp}_{\emptyset,\mbf 1})_\sigma$ est égale à 
\[m(\pi,\sigma)= \frac{1}{|\mathfrak S_\sigma|}\sum_{x\in \mathfrak S_\sigma} \langle\pi, x\rangle.\]
Il en résulte que la multiplicité totale de $\pi^{K_N}$ dans l'espace des formes automorphes cuspidales est donnée par \[m(\pi)=\sum_\sigma m(\pi,\sigma).\]
Ceci est en accord avec la formule de multiplicités de  Arthur et Kottwitz.

Soit $W$ une représentation arbitraire  de $({}^{L}  G)^I/(Z_{\wh  G})^{ \on{Gal}(\wt F/F)}$. Supposons d'abord que $\sigma$ soit un paramètre elliptique stable, ce qui signifie que $\mathfrak S_\sigma$ est trivial. Alors exactement comme dans le  corollaire 2.1, on a la conséquence suivante de la proposition~\ref{prop-ppale}. 
\begin{cor}Pour un paramètre stable $\sigma$,
\[
(H^{\cusp}_{I,W})_\sigma= (H^{\cusp}_{\emptyset,\mathbf{1}})_\sigma \otimes W_{\sigma^I}.
\]
\end{cor}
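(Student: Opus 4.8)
The plan is to deduce this directly from Proposition \ref{prop-ppale}, with no new input. First I would apply \eqref{eq-prop-ppale} to arbitrary $I$ and $W$ to get
\[(H_{I,W})_\sigma = \big(\mathfrak A_\sigma \otimes_{\Qlbar} W_{\sigma^I}\big)^{S_\sigma},\]
keeping track of how $S_\sigma$ acts on the two factors: on $\mathfrak A_\sigma$ it acts through the quotient map $S_\sigma \to \mf S_\sigma$ (using the $\mf S_\sigma$-action supplied by the proposition), and on $W_{\sigma^I}$ it acts through the diagonal embedding $S_\sigma \hookrightarrow \wh G^I \subset ({}^L G)^I$ followed by the action on $W$ — this diagonal action is well defined precisely because $S_\sigma$ centralizes $\sigma(\Gamma)$, so its diagonal image centralizes $\sigma^I(\Gamma^I)$.

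Next I would use the hypothesis that $\sigma$ is a \emph{stable} elliptic parameter, which by definition means $\mf S_\sigma = S_\sigma/(Z_{\wh G})^{\on{Gal}(\wt F/F)}$ is trivial, i.e. $S_\sigma = (Z_{\wh G})^{\on{Gal}(\wt F/F)}$. Then the action of $S_\sigma$ on $\mathfrak A_\sigma$ is trivial, since it factors through $\mf S_\sigma = \{1\}$. Moreover, since $W$ is by assumption inflated from $({}^L G)^I/(Z_{\wh G})^{\on{Gal}(\wt F/F)}$, the diagonal image of $S_\sigma = (Z_{\wh G})^{\on{Gal}(\wt F/F)}$ in $({}^L G)^I$ — which lands in the (diagonal) center — acts trivially on $W$, hence on $W_{\sigma^I}$. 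So $S_\sigma$ acts trivially on the whole tensor product and
\[(H_{I,W})_\sigma = \mathfrak A_\sigma \otimes_{\Qlbar} W_{\sigma^I}.\]
Specializing to $I = \emptyset$, $W = \mathbf 1$ (so that $W_{\sigma^\emptyset} = \Qlbar$) gives $(H_{\emptyset,\mathbf 1})_\sigma = \mathfrak A_\sigma$, and substituting this back yields the asserted identity. The $\mathbb T_N \times \Gamma^I$-equivariance, the functoriality in $W$, and the compatibilities with coalescence and with change of level are all inherited verbatim from Proposition \ref{prop-ppale}.

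I do not expect a real obstacle here: the statement is a formal consequence of the main proposition. The only point needing a little care is the bookkeeping of how $S_\sigma$ is placed inside $({}^L G)^I$ (diagonally) and the verification that inflation from the quotient by $(Z_{\wh G})^{\on{Gal}(\wt F/F)}$ does make the residual central action on $W_{\sigma^I}$ trivial; once that is nailed down the corollary is immediate, and it is the analogue for general stable $\sigma$ of Corollaire 2.1, which was obtained the same way from the triviality of $\mf S_\sigma$ in the $GL_r$ case.
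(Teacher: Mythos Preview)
Your proof is correct and is exactly the argument the paper intends: the text says ``exactement comme dans le corollaire 2.1'' and provides no further details, and you have simply spelled out that reference --- apply Proposition~\ref{prop-ppale}, use that stability means $\mathfrak S_\sigma=\{1\}$ so $S_\sigma=(Z_{\wh G})^{\on{Gal}(\wt F/F)}$ acts trivially on both $\mathfrak A_\sigma$ and (by the standing hypothesis on $W$ made just before the corollary) on $W_{\sigma^I}$, and then identify $\mathfrak A_\sigma$ with $(H_{\emptyset,\mathbf 1})_\sigma$ by specializing to $I=\emptyset$, $W=\mathbf 1$.
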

Ceci a été prouvé par Kazhdan et Varshavsky dans un preprint non publié \cite{KV}, en supposant qu'une composante locale des représentations automorphes dans $(H^{\cusp}_{\emptyset,\mathbf{1}})_\sigma$  est supercuspidale.

Pour un paramètre non nécessairement stable, la proposition~\ref{prop-ppale} implique 
\[
(H^{\cusp}_{I,W})^{ss}_\sigma = \bigoplus_{\pi} \pi^{K_N}\otimes \Hom_{\mathfrak S_\sigma}(\rho_\pi^\vee, W_{\sigma^I}),
\]
comme semi-simplifiés de   $\mathbb T_N^{\Qlbar}\times (\Gamma_F)^I$-modules (mais on rappelle que $(H^{\cusp}_{I,W})_\sigma$ est déjà semi-simple en tant que  $(\Gamma_F)^I$-module). 
En particulier la multiplicité de  $\pi^{K_{N}}$ dans $(H^{\cusp}_{I,W})^{ss}_\sigma$ est 
\[m_{I,W}(\pi,\sigma)=\frac{1}{|\mathfrak S_\sigma|}\sum_{x\in\mathfrak S_\sigma} \langle \pi,x\rangle\on{tr}(x\mid W),\]
ce qui est similaire à la formule de multiplicités de  Kottwitz pour la cohomologie des variétés de  Shimura,  conjecturée dans  \cite{KoAnnArbor}.

\bigskip

On espère la description locale suivante des espaces de multiplicités $\mathfrak A_\sigma$.

Si  $v$ est une place de $F$, soit $\sigma_v$ la restriction de $\sigma$ au groupe de décomposition  $\on{Gal}(\ov F_v/F_v)$, et soit  $S_{\sigma_v}$ le centralisateur dans $\wh G$ de son image dans  ${}^{L}  G$. 

Pour simplifier on suppose $G$ quasi-déployé. 
On conjecture qu'il existe une factorisation  (dépendant d'une normalisation de  Whittaker globale)
\[\mathfrak A_\sigma=\bigotimes' \mathfrak A_{\sigma_v},\]
où  $\mathfrak A_{\sigma_v}$ est un espace vectoriel de dimension finie muni d'actions commutant entre elles  de   $\mathbb T_{K_v}^{\Qlbar}$ et de  $S_{\sigma_v}$, dépendant seulement de  $\sigma_v$. 
On conjecture que l'action de $S_\sigma$ sur  $\mathfrak A_\sigma$ 
est la restriction par 
 $S_\sigma\to \prod_v S_{\sigma_v}$ de l'action de $ \prod_v S_{\sigma_v}$ sur 
 $\bigotimes' \mathfrak A_{\sigma_v}$. Alors  $\mathfrak A_{\sigma_v}=\oplus \pi_v^{K_v}\otimes \rho_{\pi_v}$. En notant la fonction trace  $\on{tr}\rho_{\pi_v}$ sur  $S_{\sigma_v}$ par $\langle \pi_v, \cdot\rangle$, on aurait alors la factorisation 
 \[\langle \pi,x\rangle=\prod_v \langle \pi_v, x\rangle.\]
Si $v$ est une place non ramifiée,  on conjecture que  $\mathfrak A_{\sigma_{v}}$ est de dimension $1$, et que  $\mathbb T_{K_v}^{\Qlbar}$ agit par un caractère déterminé par  $\sigma_v$, et que  $S_{\sigma_v}$ agit par un caractère (dépendant de la normalisation de  Whittaker). En particulier, si $K_N=G(\mathbb O)$ est partout  hyperspécial, on s'attend à ce que 
$\mathfrak A_\sigma$ soit de dimension $1$, sur lequel l'algèbre de  Hecke agit par le caractère associé à  $\sigma$, et  $\mathfrak S_\sigma$ agit trivialement.  
A priori $\mathfrak S_\sigma$ pourrait agir par un caractère, 
mais on s'attend à ce qu'on puisse attacher à  $\sigma$ une forme automorphe, et donc ce caractère devrait être trivial. Il en résulte que l'on devrait avoir 
\[(H^{\cusp}_{I,W})_\sigma= \pi^{G(\mathbb O)}\otimes W_{\sigma^I}^{\mathfrak S_\sigma}.\]

\section{Non déformabilité des paramètres elliptiques}\label{non-deform}   
            
            Le reste de l'article est consacré au rappel de la construction des foncteurs $H$ et $H^{\cusp}$  et à la preuve de la proposition \ref{prop-ppale}. On commence par rappeler un lemme  connu. 
            
            Soit $\sigma$ un paramètre de Langlands vérifiant les conditions (C1), (C2), (C3), (C4), (C5) ci-dessus. 
            On rappelle que l'on note $S_{\sigma}$ le centralisateur de $\sigma$ dans $\wh G$ et  que 
  $\sigma$ est dit elliptique si   
 $\mathfrak S_{\sigma}:=S_{\sigma}/(Z_{\wh  G})^{ \on{Gal}(\wt F/F)}$ est fini.

 Il est évident que si $\sigma$ n'est pas elliptique il se déforme parmi les paramètres vérifiant (C1), (C2), (C3), (C4), (C5). En effet si $\sigma$ n'est pas elliptique $T:=\Ker( S_{\sigma}\to 
 (\wh G^{ab})^{ \on{Gal}(\wt F/F)})$ n'est pas fini (car il est isogène à $\mathfrak S_{\sigma}$). Or pour tout $g\in T(\Qlbar)$, $\sigma_{g}
$ défini par $\sigma_{g}(\gamma)=\sigma(\gamma) g^{\deg(\gamma)}$ vérifie 
 (C1), (C2), (C3), (C4), (C5) (et on peut même prendre  $g\in T(A)$ où $A$ est une  $\Qlbar$-algèbre arbitraire). 
 
 Ce qui nous intéresse est la réciproque. Elle est moins évidente, mais bien connue:  si $\sigma$  est elliptique, alors il ne peut pas être déformé parmi les morphismes vérifiant des conditions similaires à  (C1), (C2), (C3), (C4), (C5).   
 Plus précisément on a le lemme suivant.  
   
  \begin{lem} \label{lem-non-def} 
  On suppose $\sigma$   elliptique vérifiant    (C1), (C2), (C3), (C4), (C5).   
  Pour toute  $\Qlbar$-algèbre locale artinienne $\mc A$ d'idéal maximal $\mc I$, pour tout ouvert $U\subset U_0$, 
 tout morphisme continu $\pi_{1}(U, \overline \eta)\to {}^{L} G(\mc A)$ vérifiant les conditions (C1),   (C5)  
 (pour $ \mc A$ au lieu de 
$ \overline{{\mathbb Q}_\ell}$) 
 et dont la  réduction modulo $ \mc I$ est  $\sigma$,  est conjugué à  $\sigma$ par un élément de $\wh G(\mc A)$ égal à $1$ modulo $\mc I$. 
  \end{lem}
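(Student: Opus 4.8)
The plan is to use deformation theory governed by Galois cohomology. Write $\sigma$ for the reduction of the given deformation $\tilde\sigma : \pi_1(U,\bar\eta)\to {}^LG(\mathcal A)$ modulo $\mathcal I$. Since $\mathcal A$ is local artinian with maximal ideal $\mathcal I$, I would argue by induction on the length of $\mathcal A$, i.e.\ by choosing a small surjection $\mathcal A \to \mathcal A_0$ with kernel a one-dimensional ideal $J$ annihilated by $\mathcal I$ (so $J\cong\Qlbar$ as $\mathcal A$-module), and assuming the result already known over $\mathcal A_0$. Thus, after conjugating $\tilde\sigma$ by an element of $\wh G(\mathcal A)$ congruent to $1$ mod $\mathcal I$, we may assume that $\tilde\sigma \bmod J$ coincides with $\sigma$ (viewed in ${}^LG(\mathcal A_0)$). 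The obstruction to the existence of $\tilde\sigma$ lifting $\sigma$, and the ambiguity in such a lift, are then controlled by the cohomology of the adjoint representation: two lifts that agree mod $J$ differ by a continuous cocycle in $Z^1(\pi_1(U,\bar\eta), \wh{\mathfrak g}\otimes J)$, and conjugating by $1+\xi$ with $\xi\in\wh{\mathfrak g}\otimes J$ changes this cocycle by the coboundary $d\xi$. So it suffices to show that $H^1(\pi_1(U,\bar\eta), \wh{\mathfrak g}_\sigma\otimes\Qlbar)=0$, where $\wh{\mathfrak g}_\sigma$ is $\wh{\mathfrak g}=\Lie\wh G$ with the $\pi_1$-action through $\sigma$ and the adjoint action of ${}^LG$; any such cocycle will then be a coboundary $d\xi$, and conjugation by $1+\xi$ (an element of $\wh G(\mathcal A)$ equal to $1$ mod $\mathcal I$ since $\xi\in\wh{\mathfrak g}\otimes J$ and $J\subset\mathcal I$) carries $\tilde\sigma$ back to $\sigma$.

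The heart of the matter is therefore the vanishing $H^1(\pi_1(U,\bar\eta),\wh{\mathfrak g}_\sigma)=0$. Here I would invoke the ellipticity hypothesis together with the global Euler--Poincar\'e formula and Poincar\'e duality over the function field $F$. First, ellipticity says $\mathfrak S_\sigma = S_\sigma/(Z_{\wh G})^{\on{Gal}(\wt F/F)}$ is finite, equivalently (as noted in the text just before the lemma) $\Lie(S_\sigma) = (\wh{\mathfrak g})^\sigma$ equals $\Lie((Z_{\wh G})^{\on{Gal}(\wt F/F)}) = (\wh{\mathfrak g}^{ab})^{\on{Gal}(\wt F/F)}$; decomposing $\wh{\mathfrak g} = \wh{\mathfrak g}^{ab}\oplus\wh{\mathfrak g}_{ad}$ (as ${}^LG$-modules, since $\wh G$ is reductive in char.\ $0$), this gives $H^0(\pi_1(U,\bar\eta),\wh{\mathfrak g}_{ad,\sigma}) = (\wh{\mathfrak g}_{ad})^\sigma = 0$. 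The form $\wh{\mathfrak g}_{ad}$ carries a nondegenerate ${}^LG$-invariant pairing (the Killing form, which is self-dual and compatible with the Galois/Frobenius weight via the chosen square root of $q$ in $E$), so by Poincar\'e duality for the smooth curve $U/\Fq$ — more precisely for $\pi_1$ of $U$, using that $\sigma$ factors through $\pi_1(U,\bar\eta)$ by (C3) — one also gets $H^2(\pi_1(U,\bar\eta),\wh{\mathfrak g}_{ad,\sigma})=0$, at least after passing to the geometric point and being careful about the contribution of $H^2$ of the curve; the Euler--Poincar\'e formula for a curve over a finite field then forces $H^1 = 0$ as well, since the Euler characteristic of a local system on an open curve over $\Fq$ is $(2-2g-\#(X\setminus U))\cdot\dim$ twisted appropriately, and the $H^0$, $H^2$ terms vanish. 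For the abelian part $\wh{\mathfrak g}^{ab}$, condition (C5) says $\sigma$ composed with the projection to $\wh G^{ab}\rtimes\on{Gal}(\wt F/F)$ has finite image, so $\wh{\mathfrak g}^{ab}_\sigma$ is a successive extension of finite-image (in fact, after a finite base change, trivial) one-dimensional local systems twisted by roots of unity, and one checks directly that the relevant $H^1$ of $\pi_1$ of a curve over $\Fq$ with such coefficients — combined with the artinian deformation being required to still satisfy (C5), which pins down the abelian part rigidly up to the allowed conjugation — contributes nothing new to the deformation space; concretely, any deformation of the abelian part still satisfying (C5) must have the same finite image, hence is already equal to $\sigma$ on that part after the harmless conjugation.

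The main obstacle, and the step I would spend the most care on, is the vanishing of $H^2$ (equivalently, via duality, of $H^0$ of the dual) and the precise bookkeeping with the $\on{Gal}(\wt F/F)$-action and the twist by $\Qlbar(1)$ in Poincar\'e duality: one must confirm that the invariant pairing on $\wh{\mathfrak g}_{ad}$ together with the cyclotomic twist matches the Tate-twist in duality, which is exactly where the hypothesis that $E$ contains a square root of $q$ enters, and that condition (C3) lets us replace $\on{Gal}(\bar F/F)$ by $\pi_1$ of a genuine open curve so that the geometric Euler characteristic is the relevant one. A secondary point is to make the induction on the length of $\mathcal A$ fully rigorous: one must note that the continuity and condition (C1) (values in ${}^LG(\mathcal A')$ for a finite extension, really just that the image is bounded/has the expected form over an artinian ring) are preserved under the conjugation by $1+\xi$, and that condition (C5) for the artinian deformation is what rigidifies the central/abelian direction — without it the torus $T$ from the discussion preceding the lemma would give genuine deformations even of an elliptic $\sigma$. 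Once these points are settled the argument is the standard "$H^1$ of adjoint vanishes $\Rightarrow$ rigid" deformation-theoretic formalism.
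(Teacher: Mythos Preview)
Your overall framework is correct and matches the paper: reduce to the first-order case $\mathcal A=\Qlbar[\epsilon]/\epsilon^2$, observe that infinitesimal deformations are controlled by $H^1(\pi_1(U,\ov\eta),\wh{\mathfrak g}_\sigma)$, strip off the abelian part using condition (C5) (the paper does this by passing to $\wh G^{ad}$, which is equivalent), and then aim for the vanishing of $H^1$ on $\wh{\mathfrak g}_{ad,\sigma}$. Your treatment of (C5) is fine: over $\Qlbar[\epsilon]/\epsilon^2$ the additive part of $(\wh G^{ab})(\mathcal A)$ is torsion-free, so a finite-image condition forces the abelian deformation to be trivial.

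The gap is in your argument for the vanishing of $H^1(\pi_1(U,\ov\eta),\wh{\mathfrak g}_{ad,\sigma})$. Two things go wrong. First, the Euler--Poincar\'e formula you quote, $(2-2g-\#(X\setminus U))\cdot\dim$, is the \emph{geometric} Euler characteristic $\chi(U_{\ov{\Fq}},\mathcal F)$; the \emph{arithmetic} Euler characteristic $\chi(U,\mathcal F)$ over $\Fq$ is always $0$, since in the Hochschild--Serre spectral sequence one has $\chi(\wh{\Z},M)=0$ for every finite-dimensional $M$. So the formula you wrote would not force $H^1=0$ even if $H^0=H^2=0$. Second, and more seriously, Poincar\'e duality for an \emph{open} curve over $\Fq$ pairs $H^i(U,\mathcal F)$ with $H^{3-i}_c(U,\mathcal F^\vee(1))$; self-duality of $\wh{\mathfrak g}_{ad}$ via the Killing form therefore relates $H^2(U,\wh{\mathfrak g}_{ad,\sigma})$ to $H^1_c(U,\wh{\mathfrak g}_{ad,\sigma}(1))$, not to $H^0$. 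Ellipticity gives you $H^0=0$ but says nothing about $H^1_c$ of a Tate twist, so the step ``$H^0=0 \Rightarrow H^2=0$ by duality'' does not go through.

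What the paper does instead is to bring in weights. One uses the Leray spectral sequence
\[
0 \to H^1\big(\wh{\Z},H^0(U_{\ov{\Fq}},\wh{\mathfrak g}_\sigma)\big) \to H^1(U,\wh{\mathfrak g}_\sigma) \to H^0\big(\wh{\Z},H^1(U_{\ov{\Fq}},\wh{\mathfrak g}_\sigma)\big) \to 0.
\]
Ellipticity gives $H^0(\wh{\Z},H^0(U_{\ov{\Fq}},\wh{\mathfrak g}_\sigma))=\on{Lie}(S_\sigma)=0$; since $\dim H^0(\wh{\Z},M)=\dim H^1(\wh{\Z},M)$ for finite-dimensional $M$, the left-hand term vanishes. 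For the right-hand term one invokes Deligne's Weil~II: the lisse sheaf $\wh{\mathfrak g}_\sigma$ is pure of weight $0$, hence $H^1(U_{\ov{\Fq}},\wh{\mathfrak g}_\sigma)$ has Frobenius weights $\geq 1$, and its Frobenius invariants are zero. This weight input is precisely what is missing from your sketch; without it (or an equivalent substitute) the vanishing of $H^1$ cannot be extracted from ellipticity and duality alone.
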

 \dem On rappelle la preuve d'après la proposition 5.12 de \cite{boeckle-harris...}
 (qui traite le cas déployé). Il suffit de traiter le cas où $\mc A=\Qlbar[\epsilon]/\epsilon^{2}$. 
Il suffit de considérer la   composition de ces morphismes avec  $\wh  G  \rtimes \on{Gal}(\wt F/F) \to  \wh  G^{ad} \rtimes \on{Gal}(\wt F/F) $ et nous sommes réduits au cas où  $\wh  G$ est semi-simple. 
Le système local  associé à la représentation adjointe   $\wh  {\mathfrak g}_{\sigma}$ est de poids  $0$ par la remarque 12.6 de \cite{coh}, donc  $H^1(U_{\overline{\mathbb F_{q}}}, \wh  {\mathfrak g}_{\sigma})$ a des poids  $\geq 1$ par Weil II, donc  
\begin{gather}\label{H0H1}H^0(\widehat {\mathbb Z},H^1(U_{\overline{\mathbb F_{q}}}, \wh  {\mathfrak g}_{\sigma}))=0.\end{gather} 
D'autre part  
$H^0(\widehat  {\mathbb Z} , H^0(U_{\overline{\mathbb F_{q}}}, \wh  {\mathfrak g}_{\sigma}))=H^0(U, \wh  {\mathfrak g}_{\sigma})$ est nul car 
$H^0(U, \wh  {\mathfrak g}_{\sigma})=\on{Lie}(S_{\sigma})$ et $S_{\sigma}$ est fini 
par l'ellipticité de  $\sigma$. Pour tout $\Qlbar$-espace vectoriel $M$ de dimension finie muni d'une action continue de $\widehat  {\mathbb Z}$, on a la suite exacte $$0\to H^{0}(\widehat  {\mathbb Z}, M)\to M\xrightarrow{1-u} M\to 
H^{1}(\widehat  {\mathbb Z}, M)\to 0,$$ où $u$ désigne l'action de $1\in \mathbb Z$,  donc $H^{0}(\widehat  {\mathbb Z}, M)$ et 
$H^{1}(\widehat  {\mathbb Z}, M)$ ont même dimension. 
Comme  on vient de voir que $H^0(\widehat  {\mathbb Z} , H^0(U_{\overline{\mathbb F_{q}}}, \wh  {\mathfrak g}_{\sigma}))=0$ on a donc 
\begin{gather}\label{H1H0}H^1(\widehat  {\mathbb Z} , H^0(U_{\overline{\mathbb F_{q}}}, \wh  {\mathfrak g}_{\sigma}))=0.\end{gather}  On déduit de \eqref{H0H1},  \eqref{H1H0} et de la suite spectrale de Leray que $H^1(U, \wh  {\mathfrak g}_{\sigma})=0$. \cqfd

 \section{Cohomologie des champs de chtoucas}\label{rappels-coho}
 
 On commence par rappeler des résultats de \cite{coh}.   On  définit,    suivant 12.3.2 de \cite{coh}, 
pour $I$ un ensemble fini, $k\in \N$, $(I_{1}, ..., I_{k})$ une  partition de $I$, 
et $W$ une représentation de $({}^{L} G)^{I}$, 
un champ   de Deligne-Mumford 
$$\Cht_{N,I,W}^{(I_{1},...,I_{k})}$$  sur $(X\sm  \wh N)^{I}$, réunion d'ouverts
$\Cht_{N,I,W}^{(I_{1},...,I_{k}), \leq \mu}$ tels que 
$\Cht_{N,I,W}^{(I_{1},...,I_{k}), \leq \mu}/\Xi$ soit de type fini. 
 De plus on  définit   $\mc F_{N,I,W, \Xi,E}^{(I_{1},...,I_{k})}$ comme le  faisceau pervers (à un décalage près) sur $\Cht_{N,I,W}^{(I_{1},...,I_{k})}/\Xi$ égal à l'image inverse d'un faisceau de Satake $\mc S_{I,W,E}^{(I_{1},...,I_{k})}$ par un morphisme lisse 
$$\epsilon_{N,(I),W,\underline{n}}^{(I_{1},...,I_{k}), \Xi}: 
\Cht_{N,I,W} ^{(I_{1},...,I_{k})}/\Xi
\to \mr{Gr}_{I,W}^{(I_{1},...,I_{k})}/G^{\mr{ad}}_{\sum _{i\in I}n_{i}x_{i}}$$
où les  entiers $n_{i}$ sont assez grands. 
Enfin on pose 
 \begin{gather} \label{defi-H-alpha-Cht}
 \mc H _{ N,I,W}^{\leq\mu,E}=  R
 \big(\mf p_{N,I}^{(I_{1},...,I_{k}),\leq\mu}\big)_{!}\Big(\restr{\mc F_{N,I,W,\Xi,E}^{(I_{1},...,I_{k})}} {\Cht_{N,I,W}^{(I_{1},...,I_{k}),\leq\mu}/\Xi}\Big)  
  \end{gather}
   qui appartient à $D^{b}_{c}((X\sm  \wh N)^{I}, E)$ 
    et ne dépend pas du choix de la   partition $(I_{1},...,I_{k})$.  

Pour tout point géométrique   $\ov x$  dans $(X\sm \wh N)^{I}$, 
on   définit le $E$-espace vectoriel gradué des  éléments  Hecke-finis   $$\Big(\varinjlim_{\mu }\restr{\mc H _{ N,I,W}^{\leq\mu,E}}{\ov x} \Big)^{\mr{Hf}}\subset \varinjlim_{\mu }\restr{\mc H _{ N,I,W}^{\leq\mu,E}}{\ov x} . $$ 
D'après \cite{coh}    \begin{gather}\label{Cccusp-Hf-non-deploye}\Big(\varinjlim_{\mu }\restr{\mc H _{ N,\{0\},\mbf 1}^{\leq\mu,E}}{\ov\eta}\Big)^{\mr{Hf}}=
       \Big(\varinjlim_{\mu }\restr{\mc H _{ N,\emptyset,\mbf 1}^{\leq\mu,E}}{\Fqbar}\Big)^{\mr{Hf}}=C_{c}^{\mr{cusp}}(\Bun_{G,N}(\Fq)/\Xi,E). \end{gather}
  D'après \cite{coh},   pour toute     flèche de spécialisation  $\on{\mf{sp}}$ de $\ov{\eta^{I}}$ vers $\Delta(\ov \eta)$,    
     $$\on{\mf{sp}}^{*}: \Big( \varinjlim _{\mu}\restr{\mc H _{N, I, W}^{\leq\mu,E}}{\Delta(\ov{\eta})} \Big)^{\mr{Hf}}\to 
 \Big( \varinjlim _{\mu}\restr{\mc H _{N, I, W}^{\leq\mu,E}}{\ov{\eta^{I}}}\Big)^{\mr{Hf}} $$
 est un isomorphisme  et  cet espace est muni 
      d'une action de 
       $\pi_{1}(\eta,\ov\eta)^{I}=\on{Gal}(\ov F/F)^{I}$, l'action sur le membre de gauche ne dépendant pas du choix de  $\ov{\eta^{I}}$ et de $\on{\mf{sp}}$. 
       Dans \cite{cong}, Cong Xue a montré que ces espaces sont de dimension finie {\it quand $G$ est déployé}. 
Une généralisation non écrite de ce théorème de Cong Xue affirme que ces espaces sont de dimension finie dans le cas général. {\it Nous admettons ce résultat}. Les opérateurs d'excursion agissent sur ces espaces   par  la construction suggérée dans \cite[12.3.4]{coh} et  détaillée dans \cite{cong-finite} et tout
$\sigma$ apparaissant dans la décomposition spectrale associée vérifie les conditions (C1), ..., (C5) ci-dessus (la condition (C5) a lieu grâce au quotient par $\Xi$). 

Dans le cas déployé,  Cong Xue a montré dans \cite{cong-finite} que pour toute     flèche de spécialisation  $\on{\mf{sp}}$ de $\ov{\eta^{I}}$ vers $\Delta(\ov \eta)$,    
     $$\on{\mf{sp}}^{*}:   \varinjlim _{\mu}\restr{\mc H _{N, I, W}^{\leq\mu,E}}{\Delta(\ov{\eta})}  \to 
 \varinjlim _{\mu}\restr{\mc H _{N, I, W}^{\leq\mu,E}}{\ov{\eta^{I}}} $$
 est un isomorphisme  et  que cet espace est muni 
      d'une action de 
       $\on{Weil}(\ov F/F)^{I}$, l'action sur le membre de gauche ne dépendant pas du choix de  $\ov{\eta^{I}}$ et de $\on{\mf{sp}}$. On s'attend à ce que ces résultats restent vrais dans le cas non déployé. 

On va maintenant introduire une notation nouvelle pour désigner certains sous-espaces  de  ces espaces de cohomologie. 

On considère $I$ de la forme $\{0\}\cup J$, où la réunion est disjointe. 
On note $(J_{1},...,J_{k})$ une partition de $J$. 
Soit $W$ une représentation de dimension finie de 
$\wh G \times ({}^{L} G)^{J}$. Il est clair que $W$ peut être incluse
dans une représentation de dimension finie $W'$ de 
$\wh G \times ({}^{L} G)^{J}$ qui s'étend à $({}^{L} G)^{\{0\}\cup J}$. 
Une variante du théoreme 12.16 de \cite{coh}  associe à $W$ un faisceau pervers $G^{\mr{ad}}_{\sum \infty x_{i}}$-équivariant
$\mc S_{\{0\}\cup J ,W,E}^{(\{0\}, J_{1},...,J_{k})}$
 sur $\restr{\mr{Gr}_{\{0\}\cup J }^{(\{0\},J_{1},...,J_{k})}}{\ov\eta\times U_0^{I}}$. 
En fait on pourrait remplacer $\ov\eta$ par le spectre du corps réflex
(de fa\c con tout à fait analogue à ce qu'on fait pour les variétés de Shimura, où $J$ est vide), 
mais on n'en a pas besoin ici. On définit alors  le champ 
$\Cht_{N,\{0\}\cup J,W}^{(\{0\},J_{1},...,J_{k}) }$
  sur $\ov\eta\times  (X\sm  \wh N)^{J}$ et sur $\Cht_{N,\{0\}\cup J,W}^{(\{0\},J_{1},...,J_{k}) }/\Xi$  on définit  un faisceau 
    $\mc F_{N,\{0\}\cup J,W, \Xi,E}^{(\{0\},J_{1},...,J_{k})}$   égal à l'image inverse de $\mc S_{\{0\}\cup J ,W,E}^{(\{0\}, J_{1},...,J_{k})}$ par le   morphisme lisse 
$$ \Cht_{N,\{0\}\cup J,W}^{(\{0\},J_{1},...,J_{k}) } /\Xi
\to \restr{\mr{Gr}_{\{0\}\cup J }^{(\{0\},J_{1},...,J_{k})}}{\ov\eta\times U_0^{J}}/G^{\mr{ad}}_{\sum _{i\in I}n_{i}x_{i}}.$$

On pose 
 \begin{gather} \nonumber 
 \mc H _{ N,\{0\}\cup J,W}^{\leq\mu,E}=  R
 \big(\mf p_{N,\{0\}\cup J}^{(\{0\},J_{1},...,J_{k}),\leq\mu}\big)_{!}\Big(\restr{\mc F_{N,\{0\}\cup J,W,\Xi,E}^{(\{0\},J_{1},...,J_{k})}} {\Cht_{N,\{0\}\cup J,W}^{(\{0\},J_{1},...,J_{k}),\leq\mu}/\Xi}\Big)  
  \end{gather}
   qui appartient à $D^{b}_{c}(\ov\eta\times (X\sm  \wh N)^{J}, E)$ 
    et ne dépend pas du choix de la   partition $(J_{1},...,J_{k})$.  
    
Alors    
     $$H^{\cusp}_{\{0\}\cup J,W} :=  \Big( \varinjlim _{\mu}\restr{\mc H _{N, \{0\}\cup J, W}^{\leq\mu,E}}{\Delta(\ov{\eta})} \Big)^{\mr{Hf}} $$ est muni 
      d'une action de 
       $\pi_{1}(\eta,\ov\eta)^{J}=\on{Gal}(\ov F/F)^{J}$.
       En effet c'est un sous-espace de $  \Big( \varinjlim _{\mu}\restr{\mc H _{N, \{0\}\cup J, W'}^{\leq\mu,E}}{\Delta(\ov{\eta})} \Big)^{\mr{Hf}} $
       (qui est muni d'une action de $\on{Gal}(\ov F/F)^{\{0\}\cup J}$). 
       En fait, en notant $\check F$ le corps réflex, on pourrait munir 
        $ H^{\cusp}_{\{0\}\cup J,W}$ d'une action de $\on{Gal}(\ov F/\check F) \times \on{Gal}(\ov F/F)^{J}$, mais on n'en a pas besoin ici. 
        
        On définit  de plus
         $$H_{\{0\}\cup J,W} :=   \varinjlim _{\mu}\restr{\mc H _{N, \{0\}\cup J, W}^{\leq\mu,E}}{\Delta(\ov{\eta})} $$ qui, dans le cas déployé d'après \cite{cong-finite} et conjecturalement en général, est muni 
      d'une action de 
       $\on{Weil}(\ov F/F)^{J}$.

       \section{Une construction proposée par Drinfeld}
       
        On explique maintenant la construction proposée par Drinfeld, en étendant la remarque 8.5 de \cite{ICM-text} au cas non déployé. 
        
        Soit $\on{Reg}$ la représentation régulière gauche de  $\widehat G$ à  coefficients dans  $E$ (considérée 
        comme une limite inductive de représentations de dimension finie). 
                On a  \begin{gather}\label{dec-H0reg-def}H^{\cusp}_{\{0\},\on{Reg}}=\bigoplus H^{\cusp}_{\{0\},V} \otimes V^{*}\end{gather} où la somme directe est prise sur les représentations irreductibles de  $\wh  G$ à coefficients dans $E$.  On remarque que  $H^{\cusp}_{\{0\},V}$ est nul lorsque le centre $(Z_{\wh  G})^{ \on{Gal}(\wt F/F)}$ agit non trivialement sur $V$, si bien que  $(Z_{\wh  G})^{ \on{Gal}(\wt F/F)}$ agit trivialement sur  $H^{\cusp}_{\{0\},\on{Reg}}$.

On va munir le  $E$-espace vectoriel gradué $H^{\cusp}_{\{0\},\on{Reg}}$  
 \begin{itemize}
 \item  [a) ] 
d'une structure de module sur 
l'algèbre des fonctions sur 
   ``l'espace affine $\mathcal S$ des  morphismes $\sigma:\on{Gal}(\overline F/F)\to 
{}^{L} G$ à coefficients dans les  $E$-algèbres vérifiant des conditions analogues à (C3), (C4), (C5)'', 
 \item [b)]  d'une  action algébrique de  $\widehat G$ (venant de l'action à droite de  
 $\widehat G$ sur $\on{Reg}$) qui est compatible avec la conjugaison par   $\widehat G$ 
sur $\mathcal S$. 
 \end{itemize}
 L'espace $\mathcal S$ n'est pas défini rigoureusement et la définition rigoureuse de la structure a) est la suivante. 
 Pour toute représentation $E$-linéaire de dimension finie  $V$ de  $ {}^{L} G$, d'espace vectoriel sous-jacent $\underline V$, 
  $H^{\cusp}_{\{0\},\on{Reg}}\otimes \underline V$ est muni d'une  action de $\on{Gal}(\overline F/F)$, qui en fait une limite inductive de représentations continues de dimension finie de  $\on{Gal}(\overline  F/F)$, de la fa\c con suivante. 
On possède l'isomorphisme   $\widehat  G$-équivariant  
   \begin{align*} \theta:  \on{Reg} \otimes \underline V  & \simeq  \on{Reg}\otimes V  \\
     f\otimes x & \mapsto    [g\mapsto f(g)  g.x]
  \end{align*} 
   où   $\widehat  G$ agit diagonalement sur le membre de droite, et où pour donner un sens à la formule on identifie ce dernier 
   à l'espace vectoriel des fonctions algébriques $\widehat G\to V$. 
 On en déduit un  isomorphisme  
    $$  H^{\cusp}_{\{0\},\on{Reg}} \otimes  \underline   V = H^{\cusp}_{\{0\},\on{Reg}  \otimes \underline V }
    \isor{\theta} H^{\cusp}_{\{0\},\on{Reg}  \otimes V}
    \simeq H^{\cusp}_{\{0\}\cup\{1\},\on{Reg} \boxtimes V}$$
   où la première égalité est tautologique (puisque  $ \underline V$ est simplement un espace vectoriel) et le dernier  isomorphisme est l'inverse de l'isomorphisme de coalescence.  
     Alors   l'action de  $ \on{Gal}(\overline F/F)$ sur le membre de gauche   
est définie comme étant l'action  de  $ \on{Gal}(\overline F/F)$ sur le membre de droite correspondant à la patte  $1$.  
 Si   $V_{1}$ et  $V_{2}$ sont deux  représentations de  ${}^{L} G$, 
les deux  actions de  $ \on{Gal}(\overline F/F)$ sur   $H^{\cusp}_{\{0\},\on{Reg}}\otimes  \underline  {V_{1}}\otimes  \underline  {V_{2}}$ associées aux actions de $\widehat G$ sur $V_{1}$ et $V_{2}$  commutent entre elles  et l'action diagonale de  $ \on{Gal}(\overline F/F)$ est égale à  l'action  
 associée à l'action diagonale  de  $\widehat G$ sur 
   $V_{1}\otimes V_{2}$.  
 Cela donne la  structure   a) car si $V$ est comme ci-dessus, $x\in V$, $\xi\in V^{*}$,  $f$ est la  fonction sur  ${}^{L} G$ définie comme le  coefficient de matrice $f(g)=\langle \xi, g.x \rangle$, et $\gamma\in  \on{Gal}(\overline F/F)$ on pose que    $F_{f,\gamma}: \sigma\mapsto f(\sigma(\gamma))$, considérée comme une  ``fonction sur $\mathcal S$'',   agit sur  $H^{\cusp}_{\{0\},\on{Reg}}$ par la composée    
 \begin{gather} \label{compo-Ffgamma}H^{\cusp}_{\{0\},\on{Reg}} \xrightarrow{\on{Id}\otimes x} H^{\cusp}_{\{0\},\on{Reg}} \otimes \underline V \xrightarrow{\gamma} H^{\cusp}_{\{0\},\on{Reg}} \otimes \underline V 
 \xrightarrow{\on{Id}\otimes  \xi} H^{\cusp}_{\{0\},\on{Reg}}. 
 \end{gather}  
 Toute fonction $f$ sur ${}^{L} G$ peut être écrite comme un  coefficient de matrice, 
 et les  fonctions  $F_{f,\gamma}$ quand  $f$ et $\gamma$ varient 
 sont supposées   ``engendrer topologiquement toutes les fonctions sur  $\mathcal S$''. La propriété ci-dessus avec  $V_{1}$ et $V_{2}$ implique les  relations entre les   $F_{f,\gamma}$, à savoir que 
 \begin{gather}\label{formule-coproduit}
 F_{f,\gamma_{1}\gamma_{2}}=\sum_{\alpha }F_{f_{1}^{\alpha},\gamma_{1}}
 F_{f_{2}^{\alpha},\gamma_{2}}\end{gather} si  l'image de $f$ par le coproduit est  
 $\sum_{\alpha} f_{1}^{\alpha}\otimes f_{2}^{\alpha}$. 
Dans  \cite{zhu-notes},  le second auteur donne une  construction équivalente de la structure a). 
Les structures a) et  b) sont  compatibles au sens suivant: la  conjugaison $g F_{f,\gamma} g^{-1}$
de l'action de $F_{f,\gamma}$ sur $H^{\cusp}_{\{0\},\on{Reg}}$ par l'action algébrique de  $g\in \widehat G$ est égale à l'action de $F_{f^{g},\gamma} $ où  
$f^{g}(h)=f(g^{-1}hg)$.

\begin{rem}
On peut définir des ``fonctions sur $\mathcal S$'' plus générales 
$F_{f,(\gamma_{1}, ..., \gamma_{n})}:\sigma\mapsto f(\sigma(\gamma_{1}), ..., \sigma(\gamma_{n}))$ avec 
$f$ une fonction sur $({}^{L} G)^{n}$ et $\gamma_{1}, ..., \gamma_{n}\in  \on{Gal}(\overline F/F)$. On peut les faire agir  sur  $H^{\cusp}_{\{0\},\on{Reg}}$,   en généralisant l'action des $F_{f,\gamma}$, par une construction généralisant  \eqref{compo-Ffgamma}.  
Cependant toute fonction sur $({}^{L} G)^{n}$ est une somme de fonctions de la forme $(g_{1}, ..., g_{n})\mapsto f_{1}(g_{1})... f_{n}(g_{n})$ et pour $f$ de cette forme $F_{f,(\gamma_{1}, ..., \gamma_{n})}$ est égal à 
$F_{f_{1},\gamma_{1}}... F_{f_{n},\gamma_{n}}$. Donc les  $F_{f,(\gamma_{1}, ..., \gamma_{n})}$ appartiennent en fait à la sous-algèbre engendrée par les $F_{f,\gamma}$ dans l'algèbre des endomorphismes de  $H^{\cusp}_{\{0\},\on{Reg}}$. Les  $F_{f,(\gamma_{1}, ..., \gamma_{n})}$ permettent cependant de préciser le lien avec les opérateurs d'excursion: 
 si $f$ est  une fonction sur $({}^{L} G)^{n}$ invariante par conjugaison par $\wh G$,  $F_{f,(\gamma_{1}, ..., \gamma_{n})}$ agit sur $H^{\cusp}_{\{0\},\on{Reg}}$ en respectant la décomposition~\eqref{dec-H0reg-def} et agit sur chaque facteur  
 $H^{\cusp}_{\{0\},V} \otimes V^{*}$ par $S\otimes \Id_{V^{*}}$ où $S$ est un opérateur d'excursion,  associé à $f$ et $(\gamma_{1}, ..., \gamma_{n})$, qui agit  sur $H^{\cusp}_{\{0\},V}$ comme dans la section 3.7 de \cite{cong-finite}. 
\end{rem}

  \begin{lem}
 Soit $c\in H^{\cusp}_{\{0\},\on{Reg}}$ et  $f$ une fonction algébrique sur  ${}^{L}G$. L'espace engendré par  $\{F_{f,\gamma}c\mid \gamma\in\Gamma\}$ est de dimension finie et l'application  $\gamma\mapsto F_{f,\gamma}c$ est continue. De plus, pour presque toute place $v$ de $X$, $F_{f,\gamma}c=f(1)c$ pour  $\gamma$ dans le sous-groupe d'inertie  $I_v$ en cette place.
 \end{lem}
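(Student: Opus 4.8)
The plan is to reduce everything to finiteness statements already known from \cite{coh} about the spaces $H_{\{0\}\cup\{1\},\on{Reg}\boxtimes V}$, together with the unramifiedness of the Galois action at almost all places. First I would fix a finite-dimensional representation $V$ of ${}^L G$ such that $f$ is a matrix coefficient $f(g)=\langle\xi,g.x\rangle$ with $x\in V$, $\xi\in V^*$; this is possible since every algebraic function on ${}^L G$ is a matrix coefficient of some finite-dimensional representation. By construction, the operator $F_{f,\gamma}$ on $H_{\{0\},\on{Reg}}$ is the composite $(\on{Id}\otimes\xi)\circ\gamma\circ(\on{Id}\otimes x)$ through $H_{\{0\},\on{Reg}}\otimes\underline V\cong H_{\{0\}\cup\{1\},\on{Reg}\boxtimes V}$, where $\gamma$ acts via the leg-$1$ Galois action. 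So $F_{f,\gamma}c$ is the image under the fixed linear map $\on{Id}\otimes\xi$ of $\gamma\cdot(c\otimes x)$, an element of the single Galois-representation space $H_{\{0\}\cup\{1\},\on{Reg}\boxtimes V}$ lying in a fixed finite-dimensional subrepresentation (namely the one generated by $c\otimes x$, using that $H_{\{0\}\cup\{1\},\on{Reg}\boxtimes V}$ is an inductive limit of finite-dimensional continuous representations of $\on{Gal}(\ov F/F)$ — here one works inside $H_{\{0\}\cup\{1\},\on{Reg}\boxtimes V'}$ for a suitable $V'$ to have an honest $\Gamma$-module).

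Given this, the three assertions follow formally. For finite-dimensionality: $\{F_{f,\gamma}c:\gamma\in\Gamma\}$ is the image under $\on{Id}\otimes\xi$ of $\{\gamma\cdot(c\otimes x):\gamma\in\Gamma\}$, which spans the finite-dimensional $\Gamma$-subrepresentation generated by $c\otimes x$; hence its linear span is finite-dimensional. For continuity: the map $\gamma\mapsto\gamma\cdot(c\otimes x)$ is continuous because the $\Gamma$-action on that finite-dimensional subrepresentation is continuous (this continuity is part of the statement recalled in \S\ref{rappels-coho} that $H_{\{0\}\cup J,W}$ carries a continuous action of $\on{Gal}(\ov F/F)^J$), and composing with the fixed linear map $\on{Id}\otimes\xi$ preserves continuity.

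For the last point, the key input is that $\sigma$ factors through $\pi_1(U,\ov\eta)$ for some open $U\subset U_0$ — more precisely, the relevant fact is that the local system on $(X\sm\wh N)^{\{1\}}$ obtained from $\mc H_{N,\{0\}\cup\{1\},\on{Reg}\boxtimes V'}$ is lisse on an open subset, so its monodromy representation is unramified at almost all places $v$; equivalently, the leg-$1$ Galois action on $H_{\{0\}\cup\{1\},\on{Reg}\boxtimes V'}$ is unramified outside a finite set of places. For such $v$ and $\gamma\in I_v$ the inertia, $\gamma$ acts trivially on $c\otimes x$, so $F_{f,\gamma}c=(\on{Id}\otimes\xi)(c\otimes x)$; but $(\on{Id}\otimes\xi)(c\otimes x)$ is exactly the scalar $\langle\xi,x\rangle=f(1)$ times $c$, by the defining formulas for $\theta$ and for $F_{f,\gamma}$ (when $\gamma$ acts as the identity, the composite collapses to multiplication by the pairing $\langle\xi,x\rangle=f(1)$). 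The main obstacle is making precise the passage to a genuine finite-dimensional continuous $\Gamma$-representation: $\on{Reg}$ is only an inductive limit, so one must choose $V'\supset V$ with $V'$ a representation of ${}^L G$ on $\{0\}\cup\{1\}$ and work inside $H_{\{0\}\cup\{1\},\on{Reg}\boxtimes V'}=\varinjlim$ of finite-dimensional continuous $\Gamma$-modules, and then check that the finite-dimensional subspace generated by a single vector $c\otimes x$ actually lands in one of these finite levels and is $\Gamma$-stable there; once this bookkeeping is done, all three claims are immediate from the corresponding properties of the cohomology of chtoucas recalled above.
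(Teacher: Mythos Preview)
Your argument is correct and is essentially the paper's own proof: write $f$ as a matrix coefficient $\langle\xi,\,\cdot\,x\rangle$ on a finite-dimensional $V$, and reduce to the finite-dimensionality, continuity, and almost-everywhere unramifiedness of the leg-$1$ Galois action on a finite level of $H_{\{0\}\cup\{1\},\on{Reg}\boxtimes V}$. The only difference is bookkeeping: the paper resolves what you call ``the main obstacle'' more directly by writing $\on{Reg}=\bigcup W_i$ as an increasing union of finite-dimensional $\hat G$-subrepresentations and observing that $(\on{Id}\otimes x)(c)$ already lies in some $H_{\{0\},W_i}\otimes\underline V$, which is finite-dimensional and $\Gamma$-stable---so there is no need to pass to an auxiliary $V'$ or to take a $\Gamma$-subrepresentation generated by $c\otimes x$.
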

 \begin{proof}
 Sans perdre de généralité,  on peut supposer que  $f(g)=\langle \xi, gx\rangle$ avec $x\in V$ et $\xi \in V^*$.
 On écrit  $\on{Reg}$ comme une réunion croissante de représentations $W_i$ de $\hat{G}$. Alors $(\on{Id}\otimes x)(c)\in H^{\cusp}_{\{0\},W_i}\otimes \underline V$ pour une certaine représentation de dimension finie   $W_i$. Le lemme en résulte immédiatement. 
 \end{proof}

 Les structures a) et b) fournissent un  ``$\mathcal O$-module sur le champ $\mathcal S/\widehat G$ des paramètres de Langlands globaux''  
 (tel que l'espace vectoriel de ses  ``sections globales sur  $\mathcal S$''  est  $H^{\cusp}_{\{0\},\on{Reg}} $). 
 Plus précisément l'action de $\widehat G$ sur $\mathcal S$ se factorise par 
 $\widehat G/(Z_{\wh  G})^{ \on{Gal}(\wt F/F)}$ et on a un ``$\mathcal O$-module sur le champ $\mathcal S/\big(\widehat G/(Z_{\wh  G})^{ \on{Gal}(\wt F/F)}\big)$  tel que l'espace vectoriel de ses  ``sections globales sur  $\mathcal S$''  est  $H^{\cusp}_{\{0\},\on{Reg}} $. 
 
Pour tout  morphisme 
 $\sigma: \on{Gal}(\overline F/F)\to {}^{L} G(\overline{ \mathbb Q_{\ell}})$, 
 on veut définir $\mathfrak A_{\sigma}$ comme la fibre de ce  $\mathcal O$-module au-dessus de  $\sigma$ (considéré comme un   ``$\overline{ \mathbb Q_{\ell}}$-point de $\mathcal S$ dont le groupe des  automorphismes dans le champ    $\mathcal S/\big(\widehat G/(Z_{\wh  G})^{ \on{Gal}(\wt F/F)}\big)$ est $\mathfrak S_{\sigma}=S_{\sigma}/(Z_{\wh  G})^{ \on{Gal}(\wt F/F)}$'').

 Rigoureusement on définit    $\mathfrak A_{\sigma}$ comme le plus grand quotient  de $H^{\cusp}_{\{0\},\on{Reg}}\otimes_{\mathbb Q_{\ell}}  \overline{ \mathbb Q_{\ell}}$ sur lequel, pour tout $f$ et $\gamma$,     $F_{f,\gamma}$ agit par   multiplication par le scalaire  $f(\sigma(\gamma))$. 
  On voit que    $\mathfrak S_{\sigma}=S_{\sigma}/(Z_{\wh  G})^{ \on{Gal}(\wt F/F)}$ agit sur $\mathfrak A_{\sigma}$. 
  
  Si l'heuristique (8.3) de \cite{ICM-text} est vraie c'est le même   $\mathfrak A_{\sigma}$ que dans l'heuristique. 
 
 \begin{rem}
 La construction précédente s'applique aussi au foncteur $H$ (au lieu de $H^{\cusp}$), dans le cas déployé (et conjecturalement en général), en rempla\c cant dans a) $ \on{Gal}(\overline F/F)$ par  $ \on{Weil}(\overline F/F)$.  \end{rem}
 
 \section{Etude du cas elliptique}
 
 On rappelle que tout ensemble fini $I$ et toute représentation $W$ de dimension finie de $({}^{L}G)^{I}$, on a admis que $H^{\cusp}_{I,W}$ est de dimension finie
(ce qui est une extension non encore écrite de  \cite{cong}, 
où ce résultat est montré lorsque $G$ est déployé). 
Pour tout paramètre de Langlands $\sigma$,  on note  alors  $(H^{\cusp}_{I,W})_{\sigma}$ l'espace propre  {\it généralisé}   
dans  $H^{\cusp}_{I,W}\otimes_{E}\overline{{\mathbb Q}_\ell} 
$
 associé  à  $\sigma$ pour l'action des opérateurs  d'excursion
 et le système de valeurs propres de ces opérateurs correspondant à $\sigma$. 

La proposition suivante implique la proposition \ref{prop-ppale}. 

  \begin{prop}
  Pour tout $\sigma$ elliptique on a l'égalité 
 $$(H^{\cusp}_{I,W})_{\sigma}= \big( \mathfrak A_{\sigma}\otimes_{\overline{{\mathbb Q}_\ell}} W_{\sigma^{I}}\big)^{S_{\sigma}} ,$$ équivariante par l'action de $\on{Gal}(\overline F/F)^{I}$, 
 fonctorielle en $W$, et compatible avec les isomorphismes de coalescence. 
Autrement dit l'heuristique  (8.3) de  \cite{ICM-text} est vraie au-dessus de $\sigma$.  
  \end{prop}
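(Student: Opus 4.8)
\emph{Strat\'egie et premi\`ere \'etape.} L'id\'ee est de tout ramener \`a l'espace $H_{\{0\},\on{Reg}}$ muni des structures a) et b), le lemme \ref{lem-non-def} servant \`a contr\^oler la situation au-dessus d'un $\sigma$ elliptique. On montre d'abord que pour $\sigma$ elliptique on a, comme $\wh G$-module pour la structure b),
\[(H_{\{0\},\on{Reg}})_{\sigma}\;\simeq\;\on{Ind}_{S_\sigma}^{\wh G}(\mf A_\sigma),\]
o\`u $\on{Ind}$ d\'esigne l'induction alg\'ebrique et $\mf A_\sigma$ la fibre au point $\sigma$ avec son action de $S_\sigma$. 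Rigoureusement : les $F_{f,\gamma}$ engendrent une alg\`ebre commutative agissant sur $H_{\{0\},\on{Reg}}$, sur la partie $\sigma$-g\'en\'eralis\'ee de laquelle elle agit \`a travers un quotient artinien local $\mc A$ de corps r\'esiduel $\Qlbar$ ; les relations \eqref{formule-coproduit} et le formalisme tannakien permettent de reconstruire \`a partir de $\mc A$ un morphisme continu $\pi_1(U,\ov\eta)\to{}^L G(\mc A)$ v\'erifiant (C1), (C5), de r\'eduction $\sigma$. Le lemme \ref{lem-non-def} affirme qu'un tel morphisme est conjugu\'e \`a $\sigma$ par un \'el\'ement de $\wh G(\mc A)$ congru \`a $1$, et la compatibilit\'e des structures a) et b) ($gF_{f,\gamma}g^{-1}=F_{f^g,\gamma}$) permet de r\'ealiser cette conjugaison par l'action alg\'ebrique b) : il n'y a donc pas de direction de d\'eformation de $\sigma$ transverse \`a son orbite. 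Comme cette orbite $\wh G\cdot\sigma\simeq\wh G/S_\sigma$ est ferm\'ee ($\sigma$ est semi-simple, d'adh\'erence r\'eductive par (C2)), le $\mc O$-module sous-jacent \`a $H_{\{0\},\on{Reg}}$ est, au-dessus de cette orbite, celui d\'etermin\'e par le $S_\sigma$-module $\mf A_\sigma$, d'o\`u l'isomorphisme annonc\'e. On en tire que les op\'erateurs d'excursion agissent sur chaque $(H_{\{0\},V})_\sigma$ par les scalaires $f(\sigma(\gamma))$ (remarque \ref{rem-propre-naif}), et que, $(Z_{\wh G})^{\on{Gal}(\wt F/F)}$ y agissant trivialement, $\mf A_\sigma$ est une repr\'esentation du groupe fini $\mf S_\sigma$, donc de dimension finie --- chaque multiplicit\'e \'etant finie d'apr\`es la finitude admise des $H_{\{0\},V}$.

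\emph{Seconde \'etape : le cas g\'en\'eral.} Pour $W=V_0\boxtimes W'$ avec $W'$ repr\'esentation de $({}^L G)^I$, on \'ecrit $H_{\{0\}\cup I,V_0\boxtimes W'}=(H_{\{0\}\cup I,\on{Reg}\boxtimes W'}\otimes V_0)^{\wh G(b)}$ et on utilise it\'erativement l'isomorphisme $H_{\{0\}\cup I,\on{Reg}\boxtimes W'}\simeq H_{\{0\},\on{Reg}}\otimes\underline{W'}$ de la section pr\'ec\'edente : sur le membre de droite, l'action de $\on{Gal}(\ov F/F)$ \`a la patte $i$ est celle d\'eduite de la structure a), et l'action b) de $\wh G$ est l'action \emph{diagonale} (structure b) sur $H_{\{0\},\on{Reg}}$ et action naturelle du facteur $\wh G$ aux pattes $I$, via $\wh G\hookrightarrow{}^L G$). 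En restreignant \`a la partie $\sigma$ et en invoquant la premi\`ere \'etape, on obtient $\on{Ind}_{S_\sigma}^{\wh G}(\mf A_\sigma)\otimes\underline{W'}$, sur lequel $\gamma$ \`a la patte $i$ op\`ere par $\phi\otimes x\mapsto[g\mapsto\phi(g)\otimes g\,\sigma(\gamma)\,g^{-1}x]$ ($x$ dans le facteur num\'ero $i$), action qui commute bien avec l'action diagonale de $\wh G(b)$. On prend alors les invariants sous $\wh G(b)$ (qui agit aussi sur $V_0$) ; l'isomorphisme ``\'evaluation en $e$'' (r\'eciprocit\'e de Frobenius) donne
\[(H_{\{0\}\cup I,V_0\boxtimes W'})_{\sigma}\;\simeq\;(\mf A_\sigma\otimes\underline{W'}\otimes V_0)^{S_\sigma},\]
o\`u $S_\sigma$ agit diagonalement ($\mf A_\sigma$ par son action, $V_0$ et $W'$ via $S_\sigma\subset\wh G\hookrightarrow{}^L G$) et o\`u l'action de $\on{Gal}(\ov F/F)^I$ devient celle de $\sigma^I$ sur $W'$, le facteur $g\,\sigma(\gamma)\,g^{-1}$ se sp\'ecialisant en $\sigma(\gamma)$ lorsque $g=e$. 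C'est exactement $(\mf A_\sigma\otimes W_{\sigma^I})^{S_\sigma}$ ; le cas $I=\emptyset$ ($W'=\mbf 1$) redonne $(H_{\{0\},V})_\sigma=(\mf A_\sigma\otimes V)^{S_\sigma}$. La fonctorialit\'e en $W$, la compatibilit\'e aux isomorphismes de coalescence, et (pour la proposition \ref{prop-ppale}) la compatibilit\'e au changement de structure de niveau ainsi qu'\`a l'action de $\mathbb T_N$, se lisent directement sur cette construction, et l'identit\'e avec l'heuristique (8.3) de \cite{ICM-text} en r\'esulte imm\'ediatement.

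\emph{Difficult\'e principale.} Le point d\'elicat est la premi\`ere \'etape : il faut promouvoir l'action de l'alg\`ebre d'excursion sur la partie $\sigma$-g\'en\'eralis\'ee en de vrais morphismes $\pi_1(U,\ov\eta)\to{}^L G(\mc A)$ au-dessus d'anneaux artiniens --- et non en un simple pseudo-caract\`ere, ou en un point de l'espace grossier des param\`etres --- ce qui demande de combiner les $F_{f,\gamma}$, les relations \eqref{formule-coproduit} et la structure b) au moyen du formalisme tannakien, afin de pouvoir appliquer le lemme \ref{lem-non-def} ; c'est l\`a que l'hypoth\`ese d'ellipticit\'e intervient de fa\c con essentielle, la d\'eformabilit\'e des param\`etres non elliptiques \'etant pr\'ecis\'ement l'obstruction dans le cas g\'en\'eral.
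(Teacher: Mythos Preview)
Your overall strategy matches the paper's: use the structures a) and b) on $H_{\{0\},\on{Reg}}$ to build, over an artinian base, a genuine homomorphism $\pi_1(U,\ov\eta)\to{}^L G$ deforming $\sigma$, apply Lemma~\ref{lem-non-def}, and deduce that $(H_{\{0\},\on{Reg}})_\sigma$ is induced from the $S_\sigma$-module $\mf A_\sigma$. Your second step (passage to general $W$ via Frobenius reciprocity) is fine and in fact spelled out in more detail than the paper's own concluding sentence.

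There is, however, a genuine gap in your first step. You assert that the commutative algebra generated by the $F_{f,\gamma}$ acts on $(H_{\{0\},\on{Reg}})_\sigma$ through an artinian local quotient $\mc A$. This is not true as stated: $(H_{\{0\},\on{Reg}})_\sigma$ is infinite-dimensional, and the $F_{f,\gamma}$ separate points of the $\wh G$-orbit of $\sigma$ --- only the $\wh G$-invariant combinations (the excursion operators) are constrained to act with a single generalized eigenvalue. The image of this algebra therefore contains at least $\mc O(\wh G/S_\sigma)$, which is far from local. The paper resolves this by first choosing $\gamma_1,\dots,\gamma_n$ whose images Zariski-generate $\on{Im}\sigma$, invoking Richardson to see that the orbit of $(\sigma(\gamma_i))$ in $({}^L G)^n$ is closed, and then applying Luna's \'etale slice theorem to produce an $S_\sigma$-stable affine slice $Z$ transverse to the orbit. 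One passes to $(H_{\{0\},\on{Reg}})_{\sigma,Z}:=(H_{\{0\},\on{Reg}})_\sigma\otimes_{\mc O(\widehat{\on{Orb}})}\mc O(\widehat Z)$; \emph{this} space is finite-dimensional (from the admitted finiteness of the $H_{\{0\},V}$, choosing a $V$ whose restriction to $\mf S_\sigma$ contains every irreducible), and only then does one define $\mc C$ as the image of the $F_{f,\gamma}$ in its endomorphisms, obtaining a genuinely artinian local algebra.

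Your subsequent move --- ``r\'ealiser cette conjugaison par l'action b)'' --- is also not quite complete: the conjugating element furnished by Lemma~\ref{lem-non-def} lives in $\wh G(\mc C)$ with $\mc C$ possibly non-reduced, not in $\wh G(\Qlbar)$, so it is not immediately an operator via b). The paper instead argues that $(\beta(\gamma_1),\dots,\beta(\gamma_n))$ lies over the slice $Z_k$ and is conjugate to a point of $\on{Orb}$; since $\on{Orb}\cap Z_k$ is the single point $(\sigma(\gamma_i))$ \emph{sch\'ematiquement}, one gets $\beta=\sigma$ on the nose and hence $\mc C=\Qlbar$. The Luna slice is thus doing double duty: it makes the relevant algebra finite-dimensional, and it rigidifies the conjugacy ambiguity left by Lemma~\ref{lem-non-def}. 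Your invocation of ``formalisme tannakien'' is not needed and does not substitute for either of these points.
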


    \noindent {\bf Démonstration.} On a  $H^{\cusp}_{\{0\},\on{Reg}}=\bigoplus H^{\cusp}_{\{0\},V} \otimes V^{*}$ où la somme directe est prise sur les représentations irreductibles de  $\wh  G$. On définit 
    \begin{gather}\label{Hregsigma}(H^{\cusp}_{\{0\},\on{Reg}})_{\sigma}=\bigoplus (H^{\cusp}_{\{0\},V})_{\sigma} \otimes V^{*}. \end{gather}
 On  a déjà vu que  $(Z_{\wh  G})^{ \on{Gal}(\wt F/F)}$ agit trivialement sur  $H^{\cusp}_{\{0\},\on{Reg}}$ et donc sur  $(H^{\cusp}_{\{0\},\on{Reg}})_{\sigma}$. 
 
 Les $F_{f,\gamma}$ agissent sur  $(H^{\cusp}_{\{0\},\on{Reg}})_{\sigma}$
 (parce qu'ils commutent avec les opérateurs  d'excursion). 
 
On choisit  $n$ et  $(\gamma_1, ..., \gamma_n)$ tels que 

(H)
$\sigma(\gamma_1)$, ..., $\sigma(\gamma_n)$ 
engendrent un sous-groupe Zariski dense de l'image of $\sigma$.

Cela est possible car l'image de $\sigma$ est topologiquement de type fini
(même argument que dans la preuve du lemme 3.2.7 de \cite{cong-finite}). 

Par l'hypothèse d'ellipticité de  $\sigma$, 
et grâce à (H), le $n$-uplet  $(\sigma(\gamma_1)$, ..., $\sigma(\gamma_n))$  est semi-simple et les résultats de Richardson \cite{richardson} rappelés dans le lemme 11.9 de \cite{coh}  impliquent que 
la  $\wh  G$-orbite par  conjugaison  
de $(\sigma(\gamma_1)$, ..., $\sigma(\gamma_n))$ 
dans $({}^L   G)^n$
est fermée, et égale à  $\wh  G/S_{\sigma}$. De plus cette orbite est schématiquement égale à l'image inverse de l'image de  $(\sigma(\gamma_1), ..., \sigma(\gamma_n))$  par le morphisme de $({}^L   G)^n$ dans le quotient grossier de $({}^L   G)^n$ par conjugaison par $\wh G$. 

Grâce à l'action des   $F_{f,\gamma_{i}}$ pour  $i=1,...,n$ et $f$ fonction sur ${}^L G$, 
$(H^{\cusp}_{\{0\},\on{Reg}})_{\sigma}$ est un  module sur  $\mathcal O(({}^L G)^n)$.
Toute fonction $\wh  G$-invariante sur $({}^L G)^n$ s'annulant sur  l'orbite $\on{Orb}=\wh  G (\sigma(\gamma_1), ..., \sigma(\gamma_n))$ agit de fa\c con nilpotente sur chaque élément de  
$(H^{\cusp}_{\{0\},\on{Reg}})_{\sigma}$ 
(car c'est un opérateur d'excursion s'annulant sur la classe de conjugaison de $\sigma$) 
et donc 
$(H^{\cusp}_{\{0\},\on{Reg}})_{\sigma}$ est un module sur  $\mathcal O(\widehat{\on{Orb}})$
où  $\widehat{\on{Orb}}$ est la complétion formelle  de $({}^L G)^n$ le long de l'orbite  $\on{Orb}$. 

Le théorème de la tranche étale de  Luna implique qu'il existe 
une variété affine localement fermée   $Z$ dans 
$({}^L  G)^n$, contenant $(\sigma(\gamma_1)$, ..., $\sigma(\gamma_n))$ et  stable par conjugaison par $S_{\sigma}$, telle que  
$Z\times_{S_{\sigma}}\wh  G\to ({}^L  G)^n$ est (fortement) étale. 
Soit $\widehat Z$  la complétion formelle de  $Z$ en $(\sigma(\gamma_1), ..., \sigma(\gamma_n))$. On a alors 
\begin{gather}\label{Orb-Z}\widehat{\on{Orb}}=\widehat Z\widehat{\times}_{S_{\sigma}}\wh  G.\end{gather} On définit  
\begin{gather}\label{defi-sigma-Z}
(H^{\cusp}_{\{0\},\on{Reg}})_{\sigma, Z}=(H^{\cusp}_{\{0\},\on{Reg}})_{\sigma}\otimes_{\mathcal O(\widehat{\on{Orb}})} \mathcal O(\widehat Z),\end{gather}
 autrement dit on quotiente par l'idéal de $\widehat Z$ dans $\widehat{\on{Orb}}$.

Il résulte de \eqref{Hregsigma} que pour toute représentation de dimension finie  $V$ de $\wh  G$
on a   
 $$(H^{\cusp}_{\{0\},V})_{\sigma}= \big( (H^{\cusp}_{\{0\},\on{Reg}})_{\sigma} \otimes_{\overline{{\mathbb Q}_\ell}} V\big)^{\wh  G} $$
De  \eqref{Orb-Z} on déduit que 
 \begin{gather}\label{dec-V-sigma}(H^{\cusp}_{\{0\},V})_{\sigma}= \big( (H^{\cusp}_{\{0\},\on{Reg}})_{\sigma, Z} \otimes_{\overline{{\mathbb Q}_\ell}} V\big)^{S_{\sigma}}. \end{gather}

 On rappelle qu'on a admis que  tous les  $H^{\cusp}_{\{0\},V}$ sont de dimension finie
 (résultat montré dans \cite{cong} lorsque $G$ est déployé). 
Comme $S_{\sigma}$ est fini
modulo   $(Z_{\wh  G})^{ \on{Gal}(\wt F/F)}$ (qui agit trivialement sur  $(H^{\cusp}_{\{0\},\on{Reg}})_{\sigma, Z}$) il existe une représentation $V$ de dimension finie de $\wh G/(Z_{\wh  G})^{ \on{Gal}(\wt F/F)}$  telle que la restriction de $V$ au groupe fini  $\mathfrak S_{\sigma}=S_{\sigma}/(Z_{\wh  G})^{ \on{Gal}(\wt F/F)}$ contient toutes les représentations irréductibles de ce groupe. Choisissant une telle représentation $V$ 
 on déduit  de la finitude de la dimension de  $H^{\cusp}_{\{0\},V}$  et  de \eqref{dec-V-sigma} que  $(H^{\cusp}_{\{0\},\on{Reg}})_{\sigma, Z}$ est de   dimension finie. 
 Il  en résulte aussi  que le seul paramètre de Langlands  pouvant apparaître dans la décomposition spectrale de  $(H^{\cusp}_{\{0\},\on{Reg}})_{\sigma, Z}$ par l'action des opérateurs d'excursion est la classe de conjugaison de $\sigma$.

On note  $\mathcal C$ la sous-algèbre commutative  de $\on{End}((H^{\cusp}_{\{0\},\on{Reg}})_{\sigma, Z})$  engendrée par l'action de tous les $F_{f,\gamma}$ quand $f$ et $\gamma$ varient. Grâce à la  structure a) on a  (quitte à restreindre l'ouvert $U$  de $X$) un morphisme  
$$\beta:\pi_{1}(U, \overline \eta)\to {}^L  G(\mathcal C) $$
défini par la propriété que  pour toute  
 fonction algébrique $f$  sur   ${}^L  G$ et pour tout $\gamma\in \on{Gal}(\ov F/F)$, on a 
  $f(\beta(\gamma))=F_{f,\gamma}$ dans $  \mathcal C$. On déduit de 
  \eqref{formule-coproduit} que $\beta$ est un morphisme de groupes. 
 La composition de $\beta$  avec  \eqref{compo-ab} est d'ordre fini fixé 
(à cause du quotient par $\Xi$ dans la  définition des $H^{\cusp}_{I,W}$). 
Le morphisme $\beta$  est continu parce que pour toute  
 fonction algébrique $f$  sur   ${}^L  G$, le morphisme 
  $\gamma\mapsto f(\beta(\gamma))$ 
est une application continue de  $\pi_{1}(U, \overline \eta)$  vers  $\mathcal C$.

On a un morphisme $ \mathcal O(( {}^{L}G)^{n})\to \mathcal C$ qui envoie 
$f_{1}\otimes ... \otimes f_{n}$ sur $F_{f_{1},\gamma_{1}}... F_{f_{n},\gamma_{n}}$. Il résulte de \eqref{defi-sigma-Z} que ce morphisme se factorise à travers 
le quotient $ \mathcal O(( {}^{L}G)^{n}) \to \mathcal O(\widehat Z)$.  
On a donc un morphisme $ \mathcal O(\widehat Z)\to \mathcal C$.  
Comme $\mathcal C$ est de dimension finie, pour $k$ entier assez grand ce morphisme se factorise par $ \mathcal O(Z_{k})$ où $Z_{k}$ désigne le voisinage épaissi d'ordre $k$ de $(\sigma(\gamma_1), ..., \sigma(\gamma_n))$ 
dans $Z$. On a donc un morphisme $\mathcal O(Z_{k})\to \mathcal C$. 
On a de plus un diagramme commutatif
\begin{gather}\label{diag-com}
 \xymatrix{
\on{Spec} \mathcal C  \ar[rr]^{(\beta(\gamma_{1}), ..., \beta(\gamma_{n}))}   \ar[rd] 
&& ({}^{L}G)^{n} 
 \\
& \on{Spec} Z_{k}\ar[ur] }\end{gather}

Grâce à (H), le centralisateur de 
$(\sigma(\gamma_{1}), ..., \sigma(\gamma_{n}))$ est égal à celui de $\sigma$.   Il en résulte que pour tout $\sigma'$ tel que 
 \begin{itemize}
 \item $\sigma'$ 
  est conjugué à $\sigma$, 
 \item $(\sigma'(\gamma_1), ..., \sigma'(\gamma_n))$ est égal à  
  $(\sigma(\gamma_1), ..., \sigma(\gamma_n))$,  
  \end{itemize}
  alors 
 $\sigma'$ est égal à $\sigma$.
 
 Pour tout caractère $\chi: \mathcal C\to \Qlbar$, 
 $\chi\circ \beta$ vérifie ces deux conditions donc est égal à $\sigma$. 
En d'autres termes 
 le réduit du spectre de $\mathcal C$ est égal au seul point $\sigma$. 
 En particulier $\mathcal C$ est une $\Qlbar$-algèbre locale artinienne. 
 
D'après le lemme~\ref{lem-non-def},  $\beta$ est conjugué à  $\sigma$. 
En particulier $(\beta(\gamma_{1}), ..., \beta(\gamma_{n}))$ est conjugué à 
$(\sigma(\gamma_{1}), ..., \sigma(\gamma_{n}))$, et comme   $(\beta(\gamma_{1}), ..., \beta(\gamma_{n}))$ est au-dessus de la tranche  $Z_{k}$ il est égal à  $(\sigma(\gamma_{1}), ..., \sigma(\gamma_{n}))$, puisque $\on{Orb}\cap Z_{k}=\{(\sigma(\gamma_{1}), ..., \sigma(\gamma_{n}))\}$ schématiquement. Comme le centralisateur de 
$(\sigma(\gamma_{1}), ..., \sigma(\gamma_{n}))$ est égal à celui de $\sigma$ on a alors  $\beta=\sigma$ et   $\mathcal C=\Qlbar$. 

On en déduit   
$(H^{\cusp}_{\{0\},\on{Reg}})_{\sigma, Z} =\mathfrak A_{\sigma}$, 
ce qui conclut la preuve de la proposition \ref{prop-ppale}. En effet 
pour tout    $I$ et pour toute représentation de dimension finie $W$ de $({}^L  G)^{I}$, 
$$(H^{\cusp}_{I,W})_{\sigma}= \big( (H^{\cusp}_{\{0\},\on{Reg}})_{\sigma, Z} \otimes_{\overline{{\mathbb Q}_\ell}} W_{\sigma^{I}}\big)^{S_{\sigma}}. $$
Une autre formulation est que $\mathfrak A_{\sigma}$, muni de l'action de $S_{\sigma}$, définit un fibré vectoriel $\wh G$-équivariant sur $\wh G/S_{\sigma}$, orbite de $\sigma$ par conjugaison par $\wh G$ (vue comme schéma affine sur $\Qlbar$), dont les sections globales sont $(H^{\cusp}_{\{0\},\on{Reg}})_{\sigma} $.

\end{document}